\numberwithin{equation}{section}
\newcommand{\Spec}{\textnormal{Spec}\,}
\newcommand{\supp}{\mathrm{supp}}
\newcommand{\Ht}{\mathrm{Ht}}
\newcommand{\OI}{\mathcal{O}}
\newcommand{\Tn}{\mathbb{T}}
\newcommand{\ind}{\mathrm{ind}}
\newcommand{\RS}{\mathcal J}
\newcommand{\mult}{\mathcal T}
\newcommand{\tail}{\mathcal L}
\newcommand{\head}{\mathcal H}
\newcommand{\PR}{R}
\newcommand{\Pomm}{\mathcal P}
\newcommand{\PO}{\Pomm_\OI}
\newcommand\rid[1]{\rightarrow_{#1}}
\newcommand{\ridc}[1]{\rightarrow^+_{#1}}
\newcommand{\cone}{\mathcal C}
\newcommand{\MS}[1]{\underline{\mathcal Ms}_{#1}}
\newcommand{\MB}[1]{\underline{\mathcal Mb}_{#1}}
\newcommand{\MSsch}[1]{\mathcal Ms_{#1}}
\newcommand{\MBsch}[1]{\mathcal Mb_{#1}}
\newcommand{\Hilb}[2]{\mathrm{Hilb}^{#1}(\mathbb A_K^{#2})}
\newtheorem{lemma}{Lemma}[section]
\newtheorem{theorem}[lemma]{Theorem}
\newtheorem{corollary}[lemma]{Corollary}
\newtheorem{proposition}[lemma]{Proposition}
\theoremstyle{definition}
\newtheorem{remark}[lemma]{Remark}
\newtheorem{example}[lemma]{Example}
\newtheorem{definition}[lemma]{Definition}
\DeclareMathAlphabet{\mathpzc}{OT1}{pzc}{m}{it}
\begin{document}

\title{The close relation between border and Pommaret marked bases} 

\author[C.Bertone]{Cristina Bertone}
\address{Dipartimento di Matematica dell'Universit\`{a} di Torino\\ 
Via Carlo Alberto 10, 10123 Torino, Italy}
\email{{cristina.bertone@unito.it}}

\author[F.Cioffi]{Francesca Cioffi}
\address{Dipartimento di Matematica e Applicazioni dell'Universit\`{a} di Napoli Federico II\\   
via Cintia, 80126  Napoli, Italy}
\email{{francesca.cioffi@unina.it}}

\normalem

\subjclass[2010]{13P10,   14C05, 14Q20}

\keywords{border basis, Pommaret basis,  Hilbert scheme}

\begin{abstract}
Given a finite order ideal $\mathcal O$ in the polynomial ring $K[x_1,\dots, x_n]$ over a field $K$, let $\partial \OI$ be the border of $\OI$ and $\PO$ the Pommaret basis of the ideal generated by the terms outside $\OI$.
In the framework of reduction structures introduced by Ceria, Mora, Roggero in 2019, we investigate relations among $\partial\OI$-marked sets (resp.~bases) and $\PO$-marked sets (resp.~bases). 

We prove that a $\partial\OI$-marked set $B$ is a marked basis  if and only if the $\PO$-marked set $P$ contained in $B$ is a marked basis and generates the same ideal as $B$. 
Using a functorial description of these marked bases, as a byproduct we obtain that the affine schemes respectively parameterizing $\partial\OI$-marked bases and $\PO$-marked bases are isomorphic. We are able to describe this isomorphism as a projection that can be explicitly constructed without the use of Gr\"obner elimination techniques. In particular, we obtain a straightforward embedding of border schemes in  smaller affine spaces. Furthermore, we observe that Pommaret marked schemes give an open covering of punctual Hilbert schemes. Several examples are given along all the paper. 
\end{abstract} 

\maketitle

\section*{Introduction}

Let $K$ be a field and $\PR_A:=A[x_1,\dots, x_n]$ the polynomial ring over a Noetherian $K$-algebra $A$ in the variables $x_1<\dots < x_n$. Let $\Tn$ denote the set of terms, i.e.~monic monomials, in $R_A$.

Given an order ideal $\OI\subseteq \Tn$, the ideals $I\subset \PR_A$ such that $\OI$ is an $A$-basis of $\PR_A/I$ have been extensively investigated and characterized in literature,  for instance  because they are suitable tools for the study of Hilbert schemes. These ideals can be identified by means of particular sets of generators called \emph{marked bases}. 
 Probably the most popular marked bases are  Gr\"obner bases, which need a term order, but they are not particularly suitable for studying Hilbert schemes, because in general they provide   locally closed subsets covering a Hilbert scheme instead of open subsets \cite[Theorem 6.3 i)]{LR2011}. Two types of term order free marked bases   proved to be  more suitable to investigate Hilbert schemes parameterizing $0$-dimensional schemes
 (for example, see \cite{  BCRAffine,  BLR, HM2011, HM2017, KR2008, KR2011,MS2005}  and the references therein). In this paper we focus on these latter ones.

If $\OI$ is a {\em finite} order ideal, then also the \emph{border} $\partial\OI$ is finite and the ideal generated by the terms outside $\OI$ admit a Pommaret basis $\PO$, which is contained in $\partial\OI$. Hence, in this paper we consider \emph{finite} order ideals and focus on the known characterizations of the ideals $I\subset \PR_A$, such that $\OI$ is an $A$-basis of $\PR_A/I$, that have been obtained by using either \emph{border bases} ($\partial\OI$-marked bases in this paper) or \emph{marked bases over a quasi-stable ideal} ($\PO$-marked bases in this paper). We explicitly describe a close relation between these types of marked bases.

Given a finite order ideal $\OI$,  $\partial\OI$-marked sets and bases are made of {\em monic marked} polynomials \cite{RStu}, whose head terms form $\partial\OI$.
Border bases were first introduced in \cite{MMM}, in the context of Gr\"obner bases, for computing a minimal basis of an ideal of polynomials vanishing at a set of rational points, also using duality. Border bases were then investigated from a numerical point of view because of their stability with respect to perturbation of the coefficients \cite{MS, Stetter} and have also attracted  interest from an algebraic point of view (see \cite{KKR}, \cite[Section 6.4]{KRvol2}). Furthermore, given a finite order ideal $\mathcal O$, the $\partial\OI$-marked bases parameterize an open subset  of a punctual Hilbert scheme (e.g.~\cite{HM2011, Lederer}).
This fact was used, for instance, to investigate elementary components of punctual Hilbert schemes in \cite{HM2017}. As $\OI$ varies in the set of order ideals of a prescribed cardinality, the corresponding border schemes give an open cover of a punctual Hilbert scheme \cite[Remark 3.2  items b) and e)]{KR2008}, \cite[Remark 2.8]{KR2011} and \cite[Proposition~1]{Lederer}.

Recall that every Artinian monomial ideal in $\PR_K$ has a \emph{Pommaret basis}. So, given a finite order ideal $\OI$, we can consider the
\emph{Pommaret basis} $\PO$ of the monomial ideal generated by $\Tn\setminus\OI$ and construct $\PO$-marked sets and bases, i.e.~sets and bases made of {\em monic marked} polynomials whose head terms form $\PO$.  We recall that every strongly stable ideal, also in the non-Artinian case (i.e.~if the order ideal $\OI$ is not finite), has a finite Pommaret basis. Hence, working on strongly stable ideals, $\PO$-marked bases were first introduced in \cite{CR} and investigated in \cite{BLR} with the aim to parameterize open subsets of a Hilbert scheme and study it locally. We highlight  that $\PO$-marked bases do not need any finiteness assumption on the underlying order ideal, nor they need a term order, even if $\PO$-marked bases have some nice properties similar to those of Gr\"obner bases (see for instance \cite[Theorems 3.5 and 4.10]{BCRAffine}). The
$\PO$-marked bases were considered in \cite{BCLR,CMR2015} in the case of homogeneous polynomials. In \cite{BCRAffine} non-homogeneous $\PO$-marked bases were studied, obtaining more efficient computational techniques than those in the homogeneous case.  An affine scheme parameterizing $\PO$-marked bases has been already described and used, for instance in \cite{Almost, BCRGore}, to successfully investigate Hilbert schemes. These schemes give an open cover of Hilbert schemes for any dimension, up to suitable gruoup actions.  

The motivating question of our work is: what is the relation between $\partial\OI$-marked sets (and bases) and $\PO$-marked sets (and bases) for a given finite order ideal $\OI$?
We answer this question, and as a byproduct we also establish the relation between the schemes paramaterizing these marked bases, giving also a computational Gr\"obner free method to eliminate some variables from the equations defining one in order to obtain  the other.

We use the framework of reduction structures introduced in \cite{CMR} and a functorial approach in order to compare the schemes parameterizing these two different types of bases. We prove that there is a close relation among $\partial\OI$-marked bases and $\PO$-marked bases (see Theorem \ref{thm:Pmarkedscheme}) and observe that the affine schemes parameterizing the ideals generated by these two types of bases are isomorphic, also giving an explicit isomorphism (see Corollary \ref{cor:isomorfismo}, Theorem \ref{thm:elimination} and Corollary~\ref{cor:isom}). We are able to describe this isomorphism as a projection that can be explicitly constructed without the use of Gr\"obner elimination techniques, obtaining an embedding of border schemes in affine spaces of lower dimension than the one where they are naturally embedded. Moreover, as we have already pointed out, we have an open cover of punctual Hilbert schemes made of marked schemes, without the use of any group actions which are instead needed for Hilbert schemes parameterizing schemes of positive dimension (see Proposition \ref{prop:cover}). Several examples are exhibited along all the paper.

The paper is organized in the following way.

In Section \ref{sec:genset} we recall some general notations and facts, the framework of reduction structures introduced in \cite{CMR} and some known results for the Pommaret reduction structure.

In Section  \ref{sec:redstr} we focus on $\partial\OI$-marked bases. 
Observing that a set $B$ of marked polynomials on $\partial \mathcal O$ always contains a set $P$ of marked polynomials on   $\mathcal P_{\mathcal O}$, we prove that $B$ is a $\partial \mathcal O$-marked basis if and only if $P$ is a $\mathcal P_{\mathcal O}$-marked basis and generates the same ideal as $B$ (Theorem \ref{prop:BorderAndPommB}).
We also compare the border reduction structure implicitly considered in \cite{KRvol2} with the one we give in Definition \ref{def:BRS}. In particular, we relate the \emph{border division algorithm} of \cite[Proposition 6.4.11]{KRvol2} to the reduction relation induced by the border reduction structure, where the terms of the border are ordered according to the degree.
In this setting, we can prove a Buchberger's criterion for $\partial\OI$-marked bases (Proposition \ref{prop:Spolivarmult}), which is alternative to that of  \cite[Proposition~6.4.34]{KRvol2}.

In Section \ref{sec:functors}, using a functorial approach, we easily prove that the scheme parameterizing $\partial\mathcal O$-marked bases, called \emph{border marked scheme}, and the scheme parameterizing $\mathcal P_{\mathcal O}$-marked bases, called \emph{Pommaret marked scheme}, are isomorphic (Corollary \ref{cor:isomorfismo}).
 The monicity of the marked sets we consider is crucial for the use of functors. In fact, although
in \cite{CMR} the authors deal with the polynomial ring $\PR_K$, everything  works also in $\PR_A$ thanks  to the monicity of marked polynomials and sets. In Proposition \ref{prop:cover}, as a byproduct of our investigation we easily obtain an open cover of a punctual Hilbert schemes by marked schemes.

In Section \ref{sec:computations}, we explicitly exhibit an isomorphism between the border marked scheme and the Pommaret marked scheme using Theorem \ref{prop:BorderAndPommB}. As a byproduct, we prove that there is always a subset of the variables involved in the generators of the ideal defining the border marked scheme that can be eliminated (see Corollary \ref{cor:elim}), obtaining in this way the ideal defining the Pommaret marked scheme. This elimination does not use Gr\"obner elimination techniques, which are in theory useful, but practically impossible to use (see Example \ref{ex:final}). Corollary \ref{cor:isom} is the geometric version of Corollary \ref{cor:elim}. Upper bounds for the degrees of the polynomials involved in this computation are given in  Proposition \ref{prop:degree}.

\section{Generalities and setting}
\label{sec:genset}

Let $K$ be a field, $\PR:=K[x_1,\dots,x_n]$ the polynomial ring in $n$ variables with coefficients in $K$, $\Tn$ the set of terms in $\PR$. 
If $Y$ is a subset of $\{x_1,\dots, x_n\}$, we denote by $\mathbb T[Y]$ the subset of $\Tn$ containing only terms in the variables in $Y$.
We denote by $A$ a Noetherian $K$-algebra with unit $1_K$ and set $\PR_A:=A\otimes_K \PR$.  
When it is needed, we assume  $x_1<\dots<x_n$ (this is just an ordering on the variables, this is {\em not} a term order). For every term $\tau$ in $\Tn$ we denote by $\min(\tau)$ the smallest variable appearing in $\tau$ with non-null exponent.

\begin{definition}\label{def:qstable}
A monomial ideal $J\subset \PR$ is \emph{quasi-stable} if, for every term $\tau \in J$ and for every $x_i>\min(\tau)$, there is a positive integer $s$ such that $x_i^s\tau/\min(\tau)$ belongs to $J$.
\end{definition}

It is well known that a quasi-stable ideal $J$ has a special monomial generating set that is called \emph{Pommaret basis} and has a very important role in this paper. The terms in the Pommaret basis of $J$ can be easily detected thanks to the following property (see \cite[Definition 4.1 and Proposition 4.7]{CMR2015}). For every term $\sigma \in J$,
\begin{equation}\label{eq:starset}
\sigma \text{ belongs to the Pommaret basis of }J \Leftrightarrow \frac{\sigma}{\min(\sigma)}\notin J.
\end{equation}

\begin{definition}
A set $\OI$ of terms in $\Tn$ is called an \emph{order ideal} if
$$\forall \ \sigma \in \Tn, \forall \ \tau \in\OI, \ \sigma \mid \tau \ \Rightarrow \sigma \in \OI.$$
\end{definition}

Given a monomial ideal $J\subseteq R$, the set of terms outside $J$ is an order ideal. On the other hand, if $\OI$ is an order ideal, then $J$ is the monomial ideal such that $J\cap\Tn=\Tn\setminus \OI$.

Along the paper we only consider \emph{finite} order ideals $\OI$. This is equivalent to the fact that the Krull dimension of the quotient ring $R/J$ is zero, i.e~$R/J$ is Artinian. In this case, $J$ is quasi-stable \cite[Corollary 2.3]{BCRAffine}. So, given a finite order ideal $\OI$, we can always consider the Pommaret basis of the ideal generated by $\Tn\setminus \OI$ in $\PR$ and we denote it by~$\PO$. 

\begin{definition} \label{def:border} 
{\cite[Definition 6.4.4, Proposition 6.4.6]{KRvol2}}
Given a finite order ideal $\OI$, \emph{the border} of $\OI$ is
\[
\partial \OI:=\{x_i\cdot \tau \ \vert \ \tau \in \OI, i\in\{1,\dots,n\}\}\setminus \OI.
\]
Since $\OI \cup \partial \OI$ is an order ideal too, for every integer $k\geq 0$ we can also define the {\em $k$-th border} $\partial^k\OI$ of $\OI$ in the following way:
$$\partial^0\OI:=\OI \text{ and } \partial^k \OI:= \partial(\partial^{k-1}\OI\cup\dots \cup \partial^0 \OI).$$
For every $\mu\in \Tn\setminus \OI$, the integer $\ind_\OI(\mu):=\min\{k \ \vert \ \mu \in \partial^k\OI\}$ is called the {\em index} of $\mu$ with respect to $\OI$.
\end{definition}

For every finite order ideal $\OI\subset \Tn$, it is immediate to observe that $\PO$ is contained in $\partial \OI$, thanks to  \eqref{eq:starset}. 

\begin{example}\label{ex:bordo-Pomm}
If we consider $\PR=K[x_1,x_2]$ and $\OI=\{1,x_1,x_2,x_1x_2\}$, then the Pommaret basis of the ideal $(\Tn\setminus \OI)$ is
$\PO=\{x_1^2,x_1^2x_2,x_2^2\}$ and the border of $\OI$ is $\partial\OI=\{x_1^2,x_1^2x_2,x_1x_2^2,x_2^2\}$.
\end{example}

Given a subset $T\subseteq \mathbb T$ of terms, we denote by $\langle T\rangle_A$ the module generated by $T$ over $A$. Moreover, for every term $\alpha$, the set $\cone_T(\alpha):=\{\tau \alpha\,\vert\, \tau \in T\}$ is called {\em cone of $\alpha$ by  $T$}. 

\begin{definition}\cite[Definition 3.1]{CMR}
A {\em reduction structure} $\mathcal J$ (RS, for short) in $\Tn$ is a $3$-uple $\RS:=(\head,\tail:=\{\tail_\alpha\, \vert\, \alpha \in \head\},\mult:=\{\mult_\alpha\, \vert\, \alpha \in \head\})$ where:
\begin{itemize}
\item $\head\subseteq \Tn$ is a {\em finite set} of terms;
\item for every $\alpha\in \head$,  $\mult_\alpha\subseteq \Tn$ is an order ideal, called the {\em multiplicative set of $\alpha$}, such that $\cup_{\alpha \in \head}\cone_{\mult_\alpha}(\alpha)=(\head)$;
\item  for every $\alpha\in \head$, $\tail_\alpha$ is a finite subset of $\Tn\setminus\cone_{\mult_\alpha}(\alpha)$ called the {\em tail set of $\alpha$}.
\end{itemize}
Given a reduction structure $\RS=(\head, \tail,\mult)$, we say that: $\RS$ has \emph{maximal cones} if, for every $\alpha\in \head$, $\mult_\alpha=\Tn$; $\RS$ has \emph{disjoint cones} if, for every $\alpha,\alpha' \in \head$, $\cone_{\mult_\alpha}(\alpha)\cap \cone_{\mult_{\alpha'}}(\alpha')=\emptyset$; $\RS$ has \emph{multiplicative variables} if, for every $\alpha \in \head$,  there is $Y_\alpha\subset \{x_1,\dots, x_n\}$ such that $\mult_\alpha=\Tn[Y_\alpha]$.
\end{definition}

Recall that the {\em support} of a polynomial $f\in \PR_A$ is the finite subset $\supp(f)\subseteq \Tn$ of those terms that appear with non-null coefficients in $f$.

\begin{definition} 
\cite{RStu} A {\em marked polynomial} is a polynomial $f\in \PR_A$ together with a specified term of $\supp(f)$ which appears in $f$ with coefficient $1_K$. It  will be called {\em head term of $f$} and denoted by $\Ht(f)$.
\end{definition} 

\begin{definition} \cite[Definitions 4.2 and 4.3]{CMR}
Given a reduction structure $\RS=(\head, \tail, \mult)$, a set $F$ of exactly $\vert \head\vert$ marked polynomials in $\PR_A$ is called an {\em $\head$-marked set} if, for every $\alpha\in \head$, there is $f_\alpha\in  F$ with $\Ht(f_\alpha)=\alpha$ and $\supp(f)\setminus\{\alpha\}\subseteq \tail_\alpha$.

Let $\OI_\head$ be the order ideal given by the terms of $\Tn$ outside the ideal generated by $\head$.
An $\head$-marked set $ F$ is called an {\em $\head$-marked basis} if $\OI_\head$ is a free set of generators for $\PR_A /(F)$ as $A$-module, that is $(F) \oplus \langle \OI_\head\rangle_A =\PR_A$. 
\end{definition}

Given a reduction structure $\RS=(\head, \tail, \mult)$ and an  $\head$-marked set $F$, a {\em $(\head)$-reduced form mo\-du\-lo~$(F)$ of a polynomial $g\in \PR_A$} is a polynomial $h\in \PR_A$ such that $g-h$ belongs to  $( F)$ and $\supp(h)\subseteq \OI_\head$. If there exists a unique $(\head)$-reduced form modulo $( F)$ of $g$ then it is called {\em $(\head)$-normal form modulo $(F)$ of $g$} and is denoted by $\mathrm{Nf}(g)$.

In the following, when 
$\OI_\head$ is finite, we will write $\OI_\head$-reduced form (resp.~$\OI_\head$-normal form) instead of $(\head)$-reduced form (resp.~$(\head)$-normal form). Furthermore, for any order ideal $\OI$, if $h$ belongs to $\langle \OI\rangle_A$, then we say that $h$ is {\em $\OI$-reduced}.

\begin{remark} \label{rem:crucial}
Given a reduction structure $\RS=(\head, \tail, \mult)$,  if $F$ is an $\head$-marked basis, then every polynomial $g\in \PR_A$ admits the $(\head)$-normal form $\mathrm{Nf}(g)$ modulo $(F)$. This is a direct consequence of the fact that $\PR_A$ decomposes in the direct sum $(F)\oplus \langle \OI_\head\rangle_A$, so that for every $g\in \PR_A$ there is a unique writing $g=h+h'$ with $h\in (F)$ and $h'\in \langle \OI_\head\rangle_A$. Hence,  $h'$ is the $(\head)$-normal form of $g$ modulo $(F)$.
\end{remark}

The definition of a reduction relation over polynomials can be useful to computationally detect  reduced and normal forms. 

\begin{definition} \label{def:reduction relation} \cite[page 105]{CMR} 
Given a reduction structure $\RS=(\head, \tail, \mult)$ and an  $\head$-marked set $F$, the {\em reduction relation} associated to $F$ is the transitive closure $\ridc{F\, \RS}$ of the relation on $\PR_A$ that is defined in the following way.  For $g,h\in \PR_A$, we say that $g$ is in relation with $h$ and write $g \rid{F\, \RS} h$ if there are terms $\gamma \in \supp(g)$ and $\alpha\in \head$ such that $\gamma=\alpha\eta$  belongs to  $\cone_{\mult_\alpha}(\alpha)$ and $h=g-c\eta f_\alpha$, where $c$ is the coefficient of $\gamma$ in $g$.
\end{definition}

Given a reduction structure $\RS=(\head, \tail, \mult)$ and an  $\head$-marked set $F$, the reduction relation $\ridc{ F\, \RS}$ is {\em Noetherian} if there is no infinite reduction chain $g_1 \rid{ F\, \RS} g_2 \rid{ F\, \RS} \dots$. 
The reduction structure $\RS$ is said Noetherian if $\ridc{ F\, \RS}$ is Noetherian for every  $\head$-marked set~$F$ (see \cite[Section~5]{CMR}).

Moreover, $\ridc{ F\, \RS}$ is {\em confluent} if for every polynomial $g\in \PR_A$ there exists only one $(\head)$-reduced form $h$ modulo $(F)$  such that $g \ridc{F\, \RS} h$.  The reduction structure $\RS$ is confluent if $\ridc{\mathcal F\, \RS}$ is confluent for every  $\head$-marked set $F$ (see \cite[Definition 7.1]{CMR}).

\begin{remark} \label{rem:crucial2}
If $\ridc{F\, \RS}$ is a reduction relation such that every polynomial $g\in \PR_A$ admits a $(\head)$-reduced form modulo $(F)$ then $(F) + \langle \OI_\head\rangle_A =\PR_A$. This holds in particular if $\ridc{F\, \RS}$ is Noetherian and confluent.
\end{remark}

\begin{definition}
Given a finite order ideal $\OI$, the \emph{Pommaret reduction structure} $\RS_{\PO}=(\mathcal H,\mathcal L,\mathcal T)$ is the reduction structure with $\head:=\PO$ and, for every $\alpha \in \PO$, $\tail_\alpha:=\OI$ and $\mult_\alpha:=\Tn\cap K[x_1,\dots,\min(\alpha)]$.
\end{definition}

The Pommaret reduction structure was investigated in \cite[Definition 4.2]{BCRAffine} for  an arbitrary order ideal~$\OI$, not necessarily finite. The reduction relation $\ridc{ F\,\RS_{\PO}}$ is Noetherian and confluent for every $\PO$-marked set $F$ \cite[Proposition~4.3]{BCRAffine}. 
 Then, it is well-known that the Pommaret reduction relation can be used to characterize $\mathcal P_{\OI}$-marked bases in the following way.

\begin{definition}
Given two marked polynomials $f,f'$ with $\Ht(f)=\alpha$ and $\Ht(f')=\alpha'$,  the {\em $S$-polynomial} of $(f,f')$ is 
$S(f,f')=\eta f-{\eta'}f'$, where $\eta \Ht(f)={\eta'}\Ht(f')=lcm(\Ht(f),\Ht(f'))$.
\end{definition}

\begin{definition}\label{def:nonMultc}
Let $\OI$ be a finite order ideal and $\RS=(\head,\tail,\mult)$ a reduction structure such that $(\head)=(\Tn\setminus\OI)$.
For every $\alpha,\alpha'\in \head$, 
 $(\alpha,\alpha')$ is a {\em non-multiplicative couple} if  there is $x_i\notin \mult_{\alpha}$ such that $x_i\alpha \in \cone_{\mult_{\alpha'}}(\alpha')$.
\end{definition}

\begin{proposition} {\rm (Buchberger's criterion for Pommaret marked bases)} \cite[Proposition 5.6]{BCRAffine}\label{prop:Buchpomm}
Let $\OI$ be a finite order ideal, consider the Pommaret reduction structure $\RS_{\PO}$ and let $P$ be a $\PO$-marked set.
The following are equivalent:
\begin{enumerate}
\item $P$ is a $\PO$-marked basis; 
\item \label{it:Snonmult} for every $p, p'\in P$ such that $(\Ht(p),\Ht(p'))$ is a non-multiplicative couple, $S(p,p')\ridc{P\,\RS_{\PO}}~0$.
\end{enumerate}
\end{proposition}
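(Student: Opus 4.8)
The plan is to prove the two implications separately, using throughout that the Pommaret reduction relation $\ridc{P\,\RS_{\PO}}$ is Noetherian (and confluent) for every $\PO$-marked set $P$, as recalled just before the statement. By Noetherianity every $g\in\PR_A$ reduces to some reduced form (a polynomial with support in $\OI$), and since each reduction step subtracts an $A$-multiple of a term times some $p_\alpha$, Remark~\ref{rem:crucial2} already gives $(P)+\langle\OI\rangle_A=\PR_A$; hence $P$ is a $\PO$-marked basis if and only if this sum is direct, i.e.~$(P)\cap\langle\OI\rangle_A=0$. For the implication (1)$\Rightarrow$(2), let $(\Ht(p),\Ht(p'))$ be a non-multiplicative couple; then $S(p,p')=\eta p-\eta'p'\in(P)$, and by Noetherianity $S(p,p')\ridc{P\,\RS_{\PO}}h$ for some reduced form $h$. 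From $S(p,p')-h\in(P)$ we get $h\in(P)$, and $\supp(h)\subseteq\OI$ gives $h\in\langle\OI\rangle_A$; since $P$ is a marked basis, $h\in(P)\cap\langle\OI\rangle_A=0$, so $S(p,p')\ridc{P\,\RS_{\PO}}0$.

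For (2)$\Rightarrow$(1) it suffices to show $(P)\cap\langle\OI\rangle_A=0$. Suppose not and pick $0\neq g\in(P)\cap\langle\OI\rangle_A$; among all representations $g=\sum_{\alpha\in\PO}h_\alpha p_\alpha$ choose one of minimal ``leading complexity'', measured through the well-founded ordering that witnesses the Noetherianity of $\RS_{\PO}$. Concretely, since the cones $\cone_{\mult_\alpha}(\alpha)$ partition $\Tn\setminus\OI$ (the involutive decomposition attached to the Pommaret basis, cf.~\eqref{eq:starset}), each product $\tau\alpha$ with $\tau\in\supp(h_\alpha)$ lies in a unique cone, and one looks at the maximal such $\tau\alpha$ and their number. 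Because $\supp(g)\subseteq\OI$, no term of $g$ lies in any cone, so the maximal leading contributions of the summands $h_\alpha p_\alpha$ must cancel among themselves; this cancellation is a syzygy of the head terms of $P$. The structural input is the well-known fact that the syzygies of a Pommaret basis are generated by the elementary ones coming from non-multiplicative variables: if $x_j\notin\mult_\alpha$, then $x_j\alpha$ belongs to the cone of a unique $\alpha'\in\PO$, say $x_j\alpha=\alpha'\eta'$ with $\eta'\in\mult_{\alpha'}$, so $(\alpha,\alpha')$ is a non-multiplicative couple and a direct check shows $x_jp_\alpha-\eta'p_{\alpha'}$ is a term multiple of $S(p_\alpha,p_{\alpha'})$. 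By (2), $S(p_\alpha,p_{\alpha'})\ridc{P\,\RS_{\PO}}0$, which translates into a representation of $S(p_\alpha,p_{\alpha'})$, hence of the above term multiple, of strictly smaller leading complexity. Substituting these into the cancelling leading part of $g$ produces a new representation of $g$ of strictly smaller leading complexity, contradicting minimality; hence $(P)\cap\langle\OI\rangle_A=0$ and $P$ is a $\PO$-marked basis. (Equivalently, this argument shows that under (2) every $g\in(P)$ reduces to $0$, which is the customary form of a Buchberger criterion.)

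The main obstacle lies in the bookkeeping of the last step. Because the tail sets of the Pommaret reduction structure are $\tail_\alpha=\OI$, which contains terms of arbitrary degree, multiplying a marked polynomial by a term may raise the degrees of the tail terms, so the ordinary degree is not a legitimate complexity measure; one must instead use the refined well-founded ordering underlying the Noetherianity of $\RS_{\PO}$ (the one behind \cite[Proposition~4.3]{BCRAffine}) and verify that the $S$-polynomial rewriting genuinely decreases it. The second delicate point is establishing that the syzygy module of the head terms is generated precisely by the non-multiplicative couples $(\alpha,\alpha')$ above; this, together with the control of the tails, is exactly where the combinatorics of the Pommaret basis replace the role that a term order plays in the classical Buchberger criterion.
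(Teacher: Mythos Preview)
The paper does not give its own proof of this proposition: it is quoted from \cite[Proposition~5.6]{BCRAffine} and immediately followed by the next section, with no proof environment at all. So there is nothing in the present paper to compare your argument against.

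That said, your sketch follows the standard shape of a Buchberger-type criterion for involutive bases and is sound in outline. The implication (1)$\Rightarrow$(2) is complete as written. For (2)$\Rightarrow$(1), the strategy of choosing a representation of minimal leading complexity and rewriting the top syzygy via the non-multiplicative $S$-polynomials is the right one, and your check that $x_jp_\alpha-\eta'p_{\alpha'}$ is a term multiple of $S(p_\alpha,p_{\alpha'})$ is correct. You are also right to flag the two genuine technical burdens: making precise the well-founded order that controls the rewriting (this is the content of \cite[Proposition~4.3]{BCRAffine}, where a suitable ordering on $\Tn$ compatible with Pommaret reduction is exhibited), and verifying that the module of syzygies on the head terms is generated by the non-multiplicative ones. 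Both are handled in \cite{BCRAffine}; your proposal correctly identifies them as the places where real work is needed rather than attempting to hand-wave past them.
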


\section{Border reduction structure}
\label{sec:redstr}

In this section, given a finite order ideal $\OI$, we focus on  reduction structures such that $\head$ is the border of $\OI$  and their relations with the Pommaret reduction structure.

\begin{definition}\cite[Table 1 and Lemma 13.2]{CMR}\label{def:BRS}
Given a finite order ideal $\OI$, let  the terms of the border $\partial \OI$ be ordered in an arbitrary way and labeled coherently, i.e.~for every $\beta_i,\beta_j\in \partial \OI$,   if $i<j$ then $\beta_i$ precedes $\beta_j$. 

The \emph{border reduction structure} $\RS_{\partial\OI}:=(\mathcal H,\mathcal L,\mathcal T)$ is the reduction structure with $\head=\partial \OI$ and, for every $\beta_i\in \partial \OI$, $\tail_{\beta_i}:=\OI$ and $\mult_{\beta_i}:=\{\mu\in\Tn \ \vert \ \forall  j>i, \,\beta_j\text{ does not divide }\beta_i\mu\}$.
\end{definition}

\begin{remark}\label{rem:otherBSs1}
Given a finite order ideal $\OI$, 
in~\cite{KRvol2} the authors consider the reduction structure $\RS'=(\head',\tail',\mult')$ with $\head'=\partial \OI$ (ordered arbitrarily and labeled coeherently as in $\RS_{\partial\OI}$), and $\tail'_\beta=\OI$, $\mult'_\beta=\Tn$ for every $ \beta\in \partial \OI$.
The border reduction structure $\RS_{\partial\OI}$ given in Definition \ref{def:BRS} is a \emph{substructure} of $\RS'$, because  the multiplicative sets of $\RS_{\partial\OI}$ are contained in those of $\RS'$ (see \cite[Definition~3.4]{CMR} for the definition of substructure).
\end{remark} 

Like observed in \cite[Remark 4.6]{CMR}, the definition of marked \emph{basis} only depends on the ideal generated by the marked set we are considering, and  this notion does not rely on the reduction relation associated to the reduction structure we are considering. However, in order to prove that the notions of $\partial \OI$-marked basis and $\PO$-marked basis associated to $\RS_{\partial\OI}$ and $\RS_{\PO}$, respectively, are closely related, we need the features of the  reduction relation given by the Pommaret reduction structure.

\begin{theorem}\label{prop:BorderAndPommB}
Let $\OI$ be a finite order ideal, $\RS_{\partial\OI}$ be the border reduction structure, with terms of $\partial \OI$ ordered arbitrarily, and $\RS_{\PO}$ be the Pommaret reduction structure. Let $B$ be a $\partial \OI$-marked set in $\PR_A$, $P$ the $\PO$-marked set contained in $B$ and $B'=B\setminus P$. Then,
\[
B \text{ is a }\partial \OI \text{-marked basis}\Leftrightarrow P \text{ is a } \PO\text{-marked basis and } B'\subset (P).
\]
\end{theorem}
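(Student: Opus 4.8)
The statement is an equivalence, so I will prove the two implications separately, and the key technical engine on both sides will be the fact that the Pommaret reduction relation $\ridc{P\,\RS_{\PO}}$ is Noetherian and confluent (\cite[Proposition 4.3]{BCRAffine}), together with Remark \ref{rem:crucial} and Remark \ref{rem:crucial2} relating reduced/normal forms to the direct-sum decomposition $\PR_A = (F)\oplus\langle\OI\rangle_A$. Throughout I keep in mind the elementary observation, already recorded after \eqref{eq:starset}, that $\PO\subseteq\partial\OI$, so that a $\partial\OI$-marked set $B$ does indeed contain a unique $\PO$-marked set $P$ (the marked polynomials whose head term lies in $\PO$), and $B'=B\setminus P$ consists of those $b\in B$ with $\Ht(b)\in\partial\OI\setminus\PO$. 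Note also that for every $\beta\in\partial\OI\setminus\PO$ we have $\supp(b)\setminus\{\beta\}\subseteq\OI$, so $b$ is itself $\OI$-reduced apart from its head term $\beta$; this is the object whose membership in $(P)$ we must control.

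**($\Leftarrow$).** Assume $P$ is a $\PO$-marked basis and $B'\subset(P)$. Then $(B)=(P)$: indeed $(P)\subseteq(B)$ trivially, while $B=P\cup B'$ with $B'\subset(P)$ gives $(B)\subseteq(P)$. Since $P$ is a $\PO$-marked basis and the order ideal associated to $\PO$ is exactly $\OI$ (because $(\PO)=(\Tn\setminus\OI)$ and hence $\OI_{\PO}=\OI$), we have $\PR_A=(P)\oplus\langle\OI\rangle_A=(B)\oplus\langle\OI\rangle_A$; as $\OI=\OI_{\partial\OI}$ as well, this is precisely the defining condition for $B$ to be a $\partial\OI$-marked basis. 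This direction is essentially formal once $(B)=(P)$ is observed.

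**($\Rightarrow$).** This is the substantive direction and where I expect the main obstacle. Assume $B$ is a $\partial\OI$-marked basis, i.e. $\PR_A=(B)\oplus\langle\OI\rangle_A$. I must produce two conclusions: that $B'\subset(P)$, and that $P$ is a $\PO$-marked basis. The natural idea is to first show $(B)=(P)$, from which both follow: if $(B)=(P)$ then $B'\subseteq B\subseteq(B)=(P)$ gives the first, and $\PR_A=(B)\oplus\langle\OI\rangle_A=(P)\oplus\langle\OI\rangle_A$ with $\OI_{\PO}=\OI$ gives the second. So the crux is $(B)=(P)$, and since $(P)\subseteq(B)$ is clear, the real content is $(B)\subseteq(P)$, equivalently $b\in(P)$ for every $b\in B'$. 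Fix $b\in B'$ with $\Ht(b)=\beta\in\partial\OI\setminus\PO$. Because $\RS_{\PO}$ is Noetherian and confluent, $b$ reduces via $\ridc{P\,\RS_{\PO}}$ to a unique $\OI$-reduced polynomial $h$, and $b-h\in(P)$. It now suffices to show $h=0$. Here is where I use that $B$ is a $\partial\OI$-marked basis: since $b\in B\subseteq(B)$ and $b-h\in(P)\subseteq(B)$, we get $h\in(B)$; but $h$ is $\OI$-reduced, i.e. $h\in\langle\OI\rangle_A$, and $(B)\cap\langle\OI\rangle_A=0$ by the direct-sum decomposition, whence $h=0$ and $b\in(P)$. (One should double-check that each single Pommaret reduction step applied to a polynomial whose non-head support lies in $\OI$ stays inside the framework — but since $\tail_\alpha=\OI$ for $\RS_{\PO}$ and cancellations only introduce terms from $\O$ or multiples of $\PO$-heads, this is exactly the setting of \cite[Proposition 4.3]{BCRAffine}, so no genuine difficulty arises.) This establishes $(B)\subseteq(P)$, hence $(B)=(P)$, and the two desired conclusions follow as above.

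**Remarks on the delicate point.** The only place one must be careful is the assertion that every $b\in B'$ admits a $\ridc{P\,\RS_{\PO}}$-reduced form at all — but this is guaranteed by Noetherianity of the Pommaret reduction structure for \emph{any} $\PO$-marked set $P$, with no hypothesis that $P$ be a basis; and uniqueness of that reduced form is exactly confluence, again holding for any $\PO$-marked set. So the argument does not secretly assume what it is trying to prove. A cosmetic point worth stating explicitly in the writeup is the identification $\OI_{\PO}=\OI_{\partial\OI}=\OI$, which is what lets the single hypothesis ``$\PR_A=(P)\oplus\langle\OI\rangle_A$'' serve simultaneously as the defining property of a $\PO$-marked basis and of a $\partial\OI$-marked basis once we know $(P)=(B)$.
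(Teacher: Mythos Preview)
Your proof is correct and takes essentially the same approach as the paper. The only difference is organizational: the paper first shows $P$ is a $\PO$-marked basis directly (from $(P)\cap\langle\OI\rangle_A\subseteq(B)\cap\langle\OI\rangle_A=0$ together with Noetherianity and confluence of $\RS_{\PO}$ giving $(P)+\langle\OI\rangle_A=\PR_A$) and then deduces $B'\subset(P)$ via normal forms, whereas you first establish $B'\subset(P)$ and then derive both conclusions from the resulting equality $(B)=(P)$; the substantive ingredients are identical.
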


\begin{proof} 
If $B$ is a $\partial \OI$-marked basis, then $\PR_A=(B)\oplus \langle \OI\rangle_A$ and, in particular, $(B)\cap \langle \OI\rangle_A=\{0\}$. Since the $\PO$-marked set $P$ is a subset of $B$, we also have $(P)\cap \langle \OI\rangle_A=\{0\}$.  Furthermore, since the Pommaret reduction structure is Noetherian and confluent, every polynomial in $R_A$ has a $\OI$-reduced form modulo $(P)$, that is $(P) + \langle \OI\rangle_A=R_A$ by Remark \ref{rem:crucial2}. These two facts together imply that $P$ is a marked basis.

Moreover, for every polynomial $b'\in B'$, let $\tilde b'\in \langle \OI\rangle_A$ be its $\OI$-normal form modulo $(P)$, hence $b'-\tilde b'$ belongs to $(P)\subset (B)$. This means that $\tilde b'$ is also the only $\OI$-normal form modulo $(B)$, which is equal to $0$ by hypothesis (see Remark \ref{rem:crucial}). Hence, $b'$ belongs to $(P)$.

Assume now that $P$ is a $\PO$-marked basis and $B'\subset (P)$. Observing that in the present hypotheses $(P)\oplus \langle \OI\rangle_A=\PR_A$ and $(P)=(P\cup B')=(B)$, we immediately conclude.
\end{proof}

\begin{example}\label{ex:dopo teorema}
Consider the finite order ideal $\OI=\{1,x_1,x_2,x_1x_2\}\subset \PR=K[x_1,x_2]$ as in Example \ref{ex:bordo-Pomm}, the $\partial \OI$-marked set $B_{e_1,e_2}$ given by the following polynomials
\[
b_{1}:=x_{{1}}^{2}-x_{{1}}-x_{{2}}-1,\, b_2:=x_{{2}}^{2}-3\,x_{{2}},\, b_3:=x_{{1}}^{2
}x_{{2}}-x_{{1}}x_{{2}}-4\,x_{{2}},\, b_4:=x_{{1}}x_{{2}}^{2}- e_1 x_{{1}}x_{{
2}}-e_2 \,x_{{1}}
, \, e_1,e_2 \in K,
\] 
and the $\PO$-marked set $P=\{b_1, b_2, b_3\}$.

For every $e_1,e_2 \in K$, $P$ is a $\PO$-marked basis, by Proposition \ref{prop:Buchpomm}. If $e_1\neq 3$ or $e_2\neq 0$, then $b_4$ does not belong to~$(P)$, hence, by Theorem \ref{prop:BorderAndPommB}, $B_{e_1,e_2}$ is not a $\partial\OI$-marked basis for these values of $e_1$ and $e_2$. 
If $e_1=3$ and $e_2=0$ then $b_4$ belongs to $(P)$, hence $B_{3,0}$ is a $\partial \OI$-marked basis.
\end{example}

In \cite{KRvol2}, the authors consider a procedure called \emph{border rewrite relation} \cite[page 432]{KRvol2} which is the reduction relation of the reduction structure $\RS'$ presented in Remark~\ref{rem:otherBSs1}. The reduction structure $\RS'$ is not Noetherian, as highlighted in  \cite[Example 6.4.26]{KRvol2}. This is due to the fact that $\RS'$ has maximal cones and, in general, there is no term order with respect to which, for every $f$ in a $\partial \OI$-marked set $F$ and for every $\gamma$ in $\OI$, the head term $\Ht(f)$ of $f$ is bigger than $\gamma$ (see \cite{RStu} and \cite[Theorem 5.10]{CMR}). 

In general, given a reduction structure $\RS$,  disjoint cones are not sufficient to ensure Noetherianity. For instance,  also the substructure $\RS_{\partial\OI}$ of $\RS'$ is in general not Noetherian. We highlight this rephrasing the above quoted example of \cite{KRvol2} for the reduction structure $\RS_{\partial \OI}$.

\begin{example}\cite[{Example 6.4.26}]{KRvol2}
Consider the order ideal $\OI=\{1,x_1,x_2,x_1^2,x_2^2\}\subset \PR_K=K[x_1,x_2]$. The border of $\OI$ id $\partial \OI=\{x_1x_2, x_1^3,x_1^2x_2,x_1x_2^2,x_2^3\}$. We consider the border reduction structure $\RS_{\partial\OI}$ with the terms of the border ordered in the following way:
\[\beta_1= x_1^3,\,\beta_2=x_1^2x_2,\,\beta_3=x_1x_2^2,\,\beta_4=x_2^3,\,\beta_5=x_1x_2,\]
and the $\partial \OI$-marked set $B$ given by the marked polynomials
\[
b_i=\beta_i \text{ for } i=1,\dots,4, \quad b_5=\beta_5-x_1^2-x_2^2.
\]
The reduction structure $\RS_{\partial \OI}$ has disjoint cones, however this does not imply Noetherianity. Indeed, consider the term $\gamma=x_1^2x_2^2$. When we reduce $\gamma$ by $\rid{B\,\RS_{\partial\OI}}$, we  fall in the same infinite loop as using the border rewrite relation of \cite{KRvol2}. Recalling that  a term $\gamma \in (\partial\OI)$ is reduced using the term $\beta_i$ with $i=\max\{j\ \vert \ \beta_j\in\partial \OI\text{ divides }\gamma\}$, we obtain
\begin{multline*}
x_1x_2^2\rid{B\, \RS_{\partial \OI}}x_1x_2^2-x_2b_5=x_1^2x_2+x_2^3\rid{B\,\RS_{\partial\OI}}x_1^2x_2+x_2^3-b_4=x_1^2x_2\rid{B\,\RS_{\partial\OI}}\\
\rid{B\,\RS_{\partial\OI}}x_1^2x_2-x_1b_5=x_1^3+x_1x_2^2\rid{B\,\RS_{\partial\OI}}x_1^3+x_1x_2^2-b_1=x_1x_2^2.
\end{multline*}
\end{example}

{\em In the following, we will order the terms of $\partial \OI$ either increasingly by degree (terms of the same degree are ordered arbitrarily), or increasingly according to a term order $\prec$}. In the first case, we will denote the reduction structure with $\RS_{\partial\OI,\deg}$ and, in the second case, with $\RS_{\partial\OI,\prec}$.

For every term order $\prec$, the reduction structure $\RS_{\partial \OI,\prec}$ is Noetherian \cite[page~127]{CMR}. 
We now focus on properties of the reduction relation of a border reduction structure $\RS_{\partial\OI,\deg}$ and explicitly prove that $\RS_{\partial\OI,\deg}$ is Noetherian and confluent.  
We use properties of the border described in \cite[Section 6.4.A]{KRvol2} and results in \cite{CMR}.

\begin{lemma}\label{lem:IndB}
Let $\OI$ be a finite order ideal.
\begin{enumerate}
\item\label{it:IndB1} If $\gamma$ is a term in $(\Tn\setminus \OI)$ and $\beta \in \partial \OI$ is a term of maximal degree dividing $\gamma$, then $\ind_{\OI}(\gamma)=\deg(\gamma/\beta)+1$;
\item \label{it:IndB2} if $\delta \in \OI$ and $\eta \in \Tn$ are terms such that  $\delta\eta$ belongs to $(\Tn\setminus \OI)$ and if $\beta \in \partial \OI$ is a term of maximal degree dividing  $\delta\eta$, then $ \deg(\eta)>\deg(\delta\eta/\beta)$.
\end{enumerate}
\end{lemma}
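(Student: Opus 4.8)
The plan is to deduce both statements from the combinatorial description of the $k$-th border. Recall from Definition \ref{def:border} that $\partial^k\OI$ consists of those terms $\mu\notin\OI$ whose index is exactly $k$, and that $\OI\cup\partial^1\OI\cup\dots\cup\partial^k\OI$ is itself an order ideal for every $k$. The key combinatorial fact I would isolate first (it is essentially \cite[Section 6.4.A]{KRvol2}) is the following: for a term $\gamma\in(\Tn\setminus\OI)$, one has $\ind_\OI(\gamma)=k$ if and only if there exists $\beta\in\partial\OI$ and $\eta\in\Tn$ with $\gamma=\beta\eta$ and $\deg(\eta)=k-1$, and no such factorization exists with a strictly smaller $\deg(\eta)$; equivalently, $\ind_\OI(\gamma)-1$ equals the minimal degree of a term $\eta$ such that $\gamma/\eta$ lies in $\partial\OI$. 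One direction is immediate by induction on $k$ using that multiplying a term of index $k-1$ by a single variable lands in $\partial^{k-1}\OI\cup\partial^k\OI$; the other direction uses that $\OI\cup\dots\cup\partial^{k-1}\OI$ is an order ideal, so if $\gamma=\beta\eta$ with $\deg\eta=k-1$ then walking down from $\beta$ (index $1$) one variable at a time keeps the index bounded by the number of steps taken.

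For item \eqref{it:IndB1}, I would argue that among all $\beta'\in\partial\OI$ dividing $\gamma$, the quantity $\deg(\gamma/\beta')$ is minimized precisely when $\deg(\beta')$ is maximized, so by the characterization above $\ind_\OI(\gamma)=\min_{\beta'}\deg(\gamma/\beta')+1=\deg(\gamma/\beta)+1$ for $\beta$ of maximal degree dividing $\gamma$. The one subtlety to check is that a divisor $\beta'$ of $\gamma$ of maximal degree among border terms indeed realizes the minimal $\eta$ in the characterization, i.e.\ that we do not gain anything by factoring through a non-divisor-chain; but since any witnessing factorization $\gamma=\beta''\eta''$ forces $\beta''\mid\gamma$, this is automatic.

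For item \eqref{it:IndB2}, set $\gamma=\delta\eta$ with $\delta\in\OI$, $\gamma\in(\Tn\setminus\OI)$, and let $\beta\in\partial\OI$ have maximal degree among divisors of $\gamma$. By \eqref{it:IndB1}, $\ind_\OI(\gamma)=\deg(\gamma/\beta)+1$. On the other hand, writing $\gamma=\delta\eta$ exhibits $\gamma$ as a product of a term in $\OI$ by a term of degree $\deg(\eta)$; since $\delta\in\OI$ has index $0$ and each of the $\deg(\eta)$ variable-multiplications raises the index by at most $1$, we get $\ind_\OI(\gamma)\le\deg(\eta)$. Combining, $\deg(\gamma/\beta)+1\le\deg(\eta)$, hence $\deg(\eta)>\deg(\gamma/\beta)=\deg(\delta\eta/\beta)$, as required.

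The only real obstacle is pinning down the index characterization cleanly, i.e.\ the equivalence $\ind_\OI(\gamma)-1=\min\{\deg\eta\ \vert\ \gamma/\eta\in\partial\OI\}$; once that is in hand, both parts are short. I expect this is either already recorded in \cite[Section 6.4.A]{KRvol2} in the form needed, or provable in a few lines by induction on $\ind_\OI(\gamma)$ using that $\partial^0\OI\cup\dots\cup\partial^k\OI$ is an order ideal and that $\partial$ is generated by single-variable multiplication.
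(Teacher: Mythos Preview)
Your proposal is correct and follows essentially the same approach as the paper's proof. The paper simply cites \cite[Proposition~6.4.8~a) and~b)]{KRvol2} for the two key facts you isolate---namely, that $\ind_\OI(\gamma)-1$ equals the minimal degree of a cofactor landing in $\partial\OI$, and that $\ind_\OI(\delta\eta)\le\deg(\eta)$ when $\delta\in\OI$---whereas you sketch their proofs; as you yourself anticipated, these are recorded in \cite[Section~6.4.A]{KRvol2} in exactly the form you need.
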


\begin{proof}
Item \eqref{it:IndB1} is a direct consequence of \cite[Proposition~6.4.8 a)]{KRvol2}.
For what concerns item~\eqref{it:IndB2}, we have $\ind_\OI(\delta\eta)=\deg(\delta\eta/\beta)+1$ by \eqref{it:IndB1} and conclude, because $\ind_\OI(\delta\eta)\leq\deg(\eta)$ by \cite[Proposition~6.4.8~b)]{KRvol2}.  
\end{proof}

\begin{proposition}
Given a finite order ideal $\OI$, consider the border reduction structure $\RS_{\partial\OI,\deg}$ and a $\partial\OI$-marked set $B$. Then, the reduction relation $\ridc{ B\,\RS_{\partial\OI,\deg}}$ is Noetherian and confluent. 
\end{proposition}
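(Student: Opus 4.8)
The plan is to establish Noetherianity first and then deduce confluence from the confluence of the Pommaret reduction structure (Proposition~\ref{prop:Buchpomm} is not needed, but \cite[Proposition~4.3]{BCRAffine} is), invoking the general machinery of \cite{CMR}. For \textbf{Noetherianity}, I would attach to each monomial $\gamma\in(\Tn\setminus\OI)$ the weight $w(\gamma):=\ind_\OI(\gamma)$ and show that a single reduction step $g\rid{B\,\RS_{\partial\OI,\deg}}h$ using $\beta_i\in\partial\OI$ strictly decreases a suitable ordering on the multiset of supports. Concretely: suppose $\gamma=\beta_i\eta\in\supp(g)$ with $\gamma\in\cone_{\mult_{\beta_i}}(\beta_i)$, so $h=g-c\eta b_i$ with $\supp(b_i)\setminus\{\beta_i\}\subseteq\OI$. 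Then the newly introduced terms are of the form $\delta\eta$ with $\delta\in\OI$. If $\delta\eta\in\OI$ there is nothing to control; if $\delta\eta\in(\Tn\setminus\OI)$, pick $\beta\in\partial\OI$ of maximal degree dividing $\delta\eta$, and by Lemma~\ref{lem:IndB}\eqref{it:IndB2} we get $\ind_\OI(\delta\eta)=\deg(\delta\eta/\beta)+1\le\deg(\eta)$. On the other hand, since $\beta_i$ has maximal degree among border terms dividing $\gamma=\beta_i\eta$ (this is forced by the degree-ordering on $\partial\OI$: if some $\beta_j$ with $j>i$ — hence $\deg\beta_j\ge\deg\beta_i$ — divided $\beta_i\eta$, then $\gamma\notin\cone_{\mult_{\beta_i}}(\beta_i)$, and a border term of strictly larger degree dividing $\gamma$ would likewise be some $\beta_j$ with $j>i$), Lemma~\ref{lem:IndB}\eqref{it:IndB1} gives $\ind_\OI(\gamma)=\deg(\eta)+1>\deg(\eta)\ge\ind_\OI(\delta\eta)$. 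Thus every term $\delta\eta\in\supp(h)\setminus\OI$ arising in the reduction has $\ind_\OI$ strictly smaller than $\ind_\OI(\gamma)$, while $\gamma$ itself is removed (or its coefficient altered) — so ordering supports by the sorted tuple of $\ind_\OI$-values of their non-$\OI$ terms (comparing lexicographically from the largest, counting multiplicities) yields a well-founded order that strictly decreases at each step. Hence no infinite chain exists.

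For \textbf{confluence}, I would argue that $\RS_{\partial\OI,\deg}$ and $\RS_{\PO}$ produce the same reduced forms. Since $\PO\subseteq\partial\OI$ by \eqref{eq:starset}, every $\partial\OI$-marked set $B$ contains the $\PO$-marked set $P$; moreover the multiplicative set of $\alpha\in\PO$ in $\RS_{\partial\OI,\deg}$ contains $\Tn\cap K[x_1,\dots,\min(\alpha)]=\mult_\alpha^{\RS_{\PO}}$ (any border term $\beta_j$ of higher index in the degree order dividing $\alpha\mu$ with $\mu\in K[x_1,\dots,\min(\alpha)]$ would force $\alpha/\min(\alpha)\in(\Tn\setminus\OI)$, contradicting \eqref{eq:starset}), so every $\RS_{\PO}$-reduction step using $P$ is also an $\RS_{\partial\OI,\deg}$-step using $B$. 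Given $g\in\PR_A$, reduce it by $\ridc{B\,\RS_{\partial\OI,\deg}}$ to some $\OI$-reduced form $h$; I claim $h$ equals the $\OI$-normal form of $g$ modulo $(P)$. Indeed $g-h\in(B)$, and since each reduction step subtracts $c\eta b_i$ with $b_i\in B$, the difference lies in $(B)$; but I want it in $(P)$. Here the cleanest route is: by Noetherianity just proved and \cite[Theorem~7.\,something / the confluence criterion]{CMR} it suffices to check local confluence, i.e.\ confluence of the reductions of $S$-polynomials $S(\beta_i,\beta_j)$ for border terms — and reduce this, via the inclusion of $\RS_{\PO}$, to the already-known confluence of $\ridc{P\,\RS_{\PO}}$. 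Alternatively, and more robustly, invoke \cite[Theorem~5.10 / Lemma~13.2]{CMR} or the general statement there that a Noetherian border reduction structure ordered by a term-order-like ordering (degree refined arbitrarily is compatible with the divisibility constraints used there) is confluent; the degree ordering on $\partial\OI$ is exactly of the type handled in \cite[Table~1]{CMR}.

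The \textbf{main obstacle} I anticipate is the confluence half: Noetherianity is a clean monovariant argument powered entirely by Lemma~\ref{lem:IndB}, but confluence genuinely needs either (a) a Newman-style local-confluence check on border $S$-polynomials together with a careful verification that overlaps reduce to zero — which is where the degree ordering (as opposed to an arbitrary ordering, which failed in Example~6.4.26) does the real work — or (b) a precise citation to the confluence results of \cite{CMR} for reduction structures whose head ordering is "term-order-like". I would lean on route (b), checking that $\RS_{\partial\OI,\deg}$ satisfies the hypotheses of the relevant theorem in \cite{CMR} (finite head, disjoint cones, Noetherian, and the ordering condition), and only fall back to the explicit $S$-polynomial computation if the citation does not literally cover the degree-refined-arbitrarily case. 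A secondary subtlety is making the well-founded order in the Noetherianity proof precise — one must confirm that altering the coefficient of $\gamma$ (rather than deleting $\gamma$) cannot happen, which follows because $c\ne0$ and $b_i$ has coefficient $1_K$ on $\beta_i$, so $\gamma$ is genuinely cancelled.
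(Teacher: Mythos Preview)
Your Noetherianity argument is essentially the paper's: both use $\ind_\OI$ as a well-founded weight, invoke Lemma~\ref{lem:IndB} to show that every new term produced by a reduction step has strictly smaller index than the term being reduced, and conclude (the paper via \cite[Theorem~5.9]{CMR}, you via a multiset ordering). That part is fine.

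The confluence argument, however, has a genuine gap. Your route through the Pommaret structure rests on the claim that for $\alpha\in\PO$ the Pommaret multiplicative set $\Tn\cap K[x_1,\dots,\min(\alpha)]$ is contained in the border multiplicative set of $\alpha$ in $\RS_{\partial\OI,\deg}$. This is false. Take $\OI=\{1,x_1,x_2,x_1x_2\}$ from Example~\ref{ex:bordo-Pomm}, with $\partial\OI$ ordered by degree so that $x_2^2$ precedes $x_1x_2^2$. Then $\alpha=x_2^2\in\PO$ has $\min(\alpha)=x_2$, so $x_1$ lies in its Pommaret multiplicative set; but $x_1x_2^2\in\partial\OI$ has higher index and divides $x_1\cdot x_2^2$, so $x_1$ is \emph{not} in the border multiplicative set of $x_2^2$. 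Your parenthetical justification (that such a divisor would force $\alpha/\min(\alpha)\notin\OI$) simply does not follow. Consequently, the embedding of Pommaret reductions into border reductions fails, and the rest of that paragraph collapses.

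The paper's route is much simpler and bypasses Pommaret entirely: it first observes that $\RS_{\partial\OI,\deg}$ has \emph{disjoint cones} (immediate from Definition~\ref{def:BRS}: if $\gamma=\eta\beta_i$ with $\eta\in\mult_{\beta_i}$, then by definition no $\beta_j$ with $j>i$ divides $\gamma$, so $\gamma\notin\cone_{\mult_{\beta_j}}(\beta_j)$), and then applies \cite[Remark~7.2]{CMR}, which says that disjoint cones plus Noetherianity imply confluence. You mention disjoint cones in passing under your route~(b), but never isolate it as the key property or prove it; that two-line verification is what you are missing.
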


\begin{proof}
First we prove that $\RS_{\partial \OI,\deg}$ has disjoint cones,  that is $\cone_{\mult_ {\beta_i}}(\beta_i)\cap\cone_{\mult_ {\beta_j}}(\beta_j)=\emptyset$, for every $\beta_i,\beta_j \in\partial \OI$. Indeed, assume $j>i$ and consider $\gamma\in\cone_{\mult_ {\beta_i}}(\beta_i)$. By definition of  cone, there is $\eta\in \mult_{\beta_i}$ such that $\eta\beta_i=\gamma$ and for every $j>i$, $\beta_j$ does not divide $\gamma$. Hence, $\gamma$ does not belong to $\cone_{\mult_ {\beta_j}}(\beta_j)$. 

Furthermore, $\RS_{\partial\OI,\deg}$ is Noetherian. It is sufficient to consider $\Tn$ with the well founded order given by the index of a term with respect to $\OI$, and apply \cite[Theorem 5.9]{CMR}. Indeed, consider $f,g \in \PR_A$ such that $f\rid{B\,\RS_{\partial\OI,\deg}}g$ and let $\gamma=\eta\beta_i \in \supp(f)\cap  \cone_{\mult_ {\beta_i}}(\beta_i)$ be the term which is reduced, i.e.~$f-c\eta  b_i=g$, with $c\in R_A$ the coefficient of $\gamma$ in $f$ and $ b_i$ the polynomial of $B$ such that $\Ht( b_i)=\beta_i$.
Consider a term $\gamma'$ in $\supp(g)\setminus \supp(f)$.  By  construction we obtain that $\gamma'$ is divisible by $\eta$. So, if $\gamma'$ belongs to $\cone_{\mult_{\beta_j}}(\beta_j)$ for some $\beta_j\in \partial \OI$, then by Lemma \ref{lem:IndB} $\ind_\OI(\gamma')=\deg(\gamma'/\beta_j)+1<\deg(\eta)+1=\ind_\OI(\gamma)$. Hence, $\ridc{B\,\RS_{\partial\OI,\deg}}$  is Noetherian. 

Summing up,  $\RS_{\partial \OI,\deg}$ has disjoint cones and it is Noetherian. Hence, we can conclude that $\ridc{B\,\RS_{\partial\OI,\deg}}$ is also confluent by \cite[Remark~7.2]{CMR}.
\end{proof}

We highlight that, given  the border reduction structure $\RS_{\partial \OI,\deg}$ and a $\partial\OI$-marked set $B$, the reduction relation $\ridc{F\,\RS_{\partial\OI,\deg}}$ is equivalent to the border division algorithm of \cite[Proposition 6.4.11]{KRvol2}, in the sense that the $\OI$-reduced forms obtained by the reduction relation $\ridc{F\,\RS_{\partial\OI,\deg}}$ are {\em normal $\OI$-reminders} of the border division algorithm (see \cite[page~426]{KRvol2}).

Recall that for a given finite order ideal $\OI$, every $\partial \OI$-marked set contains a $\PO$-marked set. Nevertheless, the different reduction relations applied on the same polynomial in general give different $\OI$-reduced  forms.

\begin{example}\label{ex:diffRforms}
Consider the order ideal $\OI=\{1,x_1,x_2,x_1x_2\}\subset \PR=K[x_1,x_2]$, as in Example~\ref{ex:bordo-Pomm}, and the border reduction structure $\RS_{\partial \OI,\deg}$ and the Pommaret reduction structure $\RS_{\PO}$.  Let $B\subset \PR$ be the $\partial\OI$-marked set given by the following polynomials:
\[
b_1:=x_{{1}}^{2}-1-x_{{1}}-x_{{2}},\, b_2:=x_{{2}}^{2}-3x_{{2}},\, b_3:=x_{{1}}^{2
}x_{{2}}-4x_2-x_{{1}}x_{{2}},\, b_4:=x_{{1}}x_{{2}}^{2}-x_1-x_1x_2
\]
and $P$ the $\PO$-marked set given by the polynomials $b_1,b_2,b_3$.
The $\partial \OI$-marked set $B$ is obtained from that of Example \ref{ex:dopo teorema} by replacing both $e_1$ and $e_2$ with $1$.
Consider the polynomial $f=x_1^2x_2^2$. We compute an $\OI$-reduced form modulo $(B)$ of $f$ in the following way: 
\[
f \ridc{B\, \RS_{\partial\OI}} f-x_1 b_4= x_1^2x_2+x_1^2 \ridc{B\, \RS_{\partial\OI}}x_1^2x_2+x_1^2-b_3-b_1=x_1x_2+x_1+5x_2+1 \in \langle \OI\rangle_K.
\]
We also compute an $\OI$-reduced form modulo $(P)$ in the following way: 
\[
f \ridc{P\, \RS_{\PO}} f-x^2_1b_2= 3x_1^2x_2 \ridc{P\, \RS_{\PO}}3x_1^2x_2-3 b_3=3x_1x_2+12x_2 \langle \OI\rangle_K.
\]
\end{example}

We now focus on how the reduction relations corresponding to border reduction structures are used in order to obtain marked bases.

\begin{definition} \label{def:ncouple}
For every $\alpha,\alpha'$ terms in   a finite set $M\subset \Tn$, $(\alpha,\alpha')$ is a \emph{neighbour couple} if either $\alpha=x_j\alpha'$ for some variable $x_j$ or $x_i \alpha=x_j\alpha'$ for some couple of variables $(x_i,x_j)$.
\end{definition}

The definition of neighbour couple is due to \cite[Definition~6.4.33~c)]{KRvol2} in the framework of border bases, however Definition \ref{def:ncouple} is given for any  finite set of terms, not necessarily the border of an order ideal. Furthermore, the notion of neighbour couple does not depend on the reduction structure we are considering.

Given a finite order ideal $\OI$ and recalling Definition \ref{def:nonMultc},  if we consider the Pommaret reduction structure $\RS_{\PO}$, then a couple of terms in $\PO$ can be simultaneously non-multiplicative and neighbour, although this is not always the case (see Example \ref{ex:NumSpoli}).

We now investigate the non-multiplicative couples for  the border reduction structure $\RS_{\partial\OI,\deg}$, showing that non-multiplicative couples are always neighbour couples in this case.

\begin{lemma}\label{lem:nonmultBorder}
Given a finite order ideal $\OI$, consider the border reduction structure $\RS_{\partial\OI,\deg}$, and let $(\beta_i,\beta_j)$ be a non-multiplicative couple of $\partial \OI$,  i.e. there is $x_\ell \notin \mult_{\beta_i}$ such that $x_\ell\beta_i\in \cone_{\mult_{\beta_j}}(\beta_j)$. More precisely, $x_\ell\beta_i=\delta \beta_j$ for some $x_\ell\notin \mult_{\beta_i}$ and $\delta \in \mult_{\beta_j}$.   
Then $j>i$ and 
$\deg(\delta)\leq 1$.
\end{lemma}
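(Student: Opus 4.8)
The plan is to unpack the definition of the multiplicative sets in $\RS_{\partial\OI,\deg}$ and track degrees carefully. Recall that the terms of $\partial\OI$ are ordered increasingly by degree, and $\mult_{\beta_i}=\{\mu\in\Tn \ \vert \ \forall j>i,\ \beta_j\nmid \beta_i\mu\}$. The hypothesis gives $x_\ell\beta_i=\delta\beta_j$ with $x_\ell\notin\mult_{\beta_i}$ and $\delta\in\mult_{\beta_j}$.

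First I would establish $j>i$. Since $x_\ell\notin\mult_{\beta_i}$, by definition there exists some index $k>i$ with $\beta_k\mid\beta_i x_\ell=\delta\beta_j$. I want to argue that in fact $j>i$ directly: suppose for contradiction $j\le i$. Then $\beta_j\mid\beta_i x_\ell$ would itself be a witness of non-multiplicativity only if $j>i$, so that alone is not a contradiction; instead I should use the degree ordering. From $x_\ell\beta_i=\delta\beta_j$ we get $\deg(\beta_j)=\deg(\beta_i)+1-\deg(\delta)\le\deg(\beta_i)+1$. If $\deg(\delta)\ge 1$ then $\deg(\beta_j)\le\deg(\beta_i)$; I would then need to rule out $\deg(\beta_j)<\deg(\beta_i)$ and $\deg(\beta_j)=\deg(\beta_i)$ with $j\le i$ using that a border is an antichain-like structure — two distinct border terms of the same degree cannot divide one another, and more generally the key fact (from \cite[Section 6.4.A]{KRvol2} / Lemma \ref{lem:IndB}) is that if $\beta,\beta'\in\partial\OI$ with $\beta\mid\beta'$ then $\beta=\beta'$, i.e.\ no border term properly divides another.

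The cleaner route, which I would actually carry out: show $\deg(\delta)\le 1$ first, then deduce $j>i$. Since $\delta\beta_j=x_\ell\beta_i$ and $\beta_j\in\partial\OI$ while $\beta_i\in\partial\OI$, write $\delta$'s relation to divisibility inside the order ideal. The term $\beta_i/\min(\beta_i)$-type arguments won't directly apply; instead, suppose $\deg(\delta)\ge 2$. Then $\delta=\delta'\delta''$ with $\delta',\delta''$ nontrivial, and $\delta'\beta_j\in(\partial\OI)=(\Tn\setminus\OI)$ is a proper divisor of $x_\ell\beta_i$. Now $x_\ell\beta_i$ has index $1$ over $\OI$ because $\beta_i\in\partial\OI$ is a maximal-degree border term dividing it (any strict divisor of $x_\ell\beta_i$ lies in $\OI\cup\partial\OI$ since $\OI\cup\partial\OI$ is an order ideal), so by Lemma \ref{lem:IndB}\eqref{it:IndB1} $\ind_\OI(x_\ell\beta_i)=\deg(x_\ell\beta_i/\beta_i)+1=2$. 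But then $\delta'\beta_j$ is a divisor of $x_\ell\beta_i$ lying outside $\OI$, of degree $\deg(x_\ell\beta_i)-\deg(\delta'')<\deg(x_\ell\beta_i)$, forcing $\ind_\OI(x_\ell\beta_i)\ge 1+\deg(\delta'')+\dots$ — more precisely, since $\delta''(\delta'\beta_j)=x_\ell\beta_i$ with $\delta'\beta_j\notin\OI$, we get $\ind_\OI(x_\ell\beta_i)\ge \ind_\OI(\delta'\beta_j)+\deg(\delta'')\ge 1+\deg(\delta'')\ge 2$, and iterating / using maximality of $\beta_i$: since $\beta_i$ has maximal degree among border terms dividing $x_\ell\beta_i$ and $\ind_\OI(x_\ell\beta_i)=2$, every border term $\gamma$ dividing $x_\ell\beta_i$ satisfies $\deg(x_\ell\beta_i/\gamma)+1=2$ by Lemma \ref{lem:IndB}\eqref{it:IndB1}, hence $\deg(\gamma)=\deg(\beta_i)$; applied to $\gamma=\beta_j$ this gives $\deg(\beta_j)=\deg(\beta_i)$, i.e.\ $\deg(\delta)=1$, contradicting $\deg(\delta)\ge 2$. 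So $\deg(\delta)\le 1$.

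Finally, for $j>i$: with $\deg(\delta)\le 1$ and $\delta\beta_j=x_\ell\beta_i\notin\OI$, we have $\deg(\beta_j)\ge\deg(\beta_i)$. If $\deg(\beta_j)>\deg(\beta_i)$ then $j>i$ by the degree ordering and we are done. If $\deg(\beta_j)=\deg(\beta_i)$ then $\delta=1$, so $\beta_i\mid\beta_j$ forces $\beta_i=\beta_j$; but then $x_\ell\beta_i=\beta_j=\beta_i$ is impossible. Hence $\deg(\beta_j)>\deg(\beta_i)$, so $j>i$. The main obstacle I anticipate is the bookkeeping in the $\deg(\delta)\le 1$ step — making rigorous the claim that $\ind_\OI(x_\ell\beta_i)=2$ and that this caps the degrees of all border divisors — but Lemma \ref{lem:IndB}\eqref{it:IndB1} together with the fact that $\OI\cup\partial\OI$ is an order ideal handles it cleanly.
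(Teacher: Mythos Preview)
Your argument has two genuine gaps.

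\textbf{First}, your proof that $\deg(\delta)\le 1$ misapplies Lemma~\ref{lem:IndB}\eqref{it:IndB1}: that formula computes the index using a border divisor of \emph{maximal} degree, not an arbitrary one. The claim that ``every border term $\gamma$ dividing $x_\ell\beta_i$ satisfies $\deg(x_\ell\beta_i/\gamma)+1=2$'' is false. Take $\OI=\{1,x_1\}\subset K[x_1,x_2]$, $\beta_i=x_1^2$, $x_\ell=x_2$: then $x_2\in\partial\OI$ divides $x_\ell\beta_i=x_1^2x_2$, yet $\deg(x_1^2x_2/x_2)+1=3$. (The parenthetical justification ``any strict divisor of $x_\ell\beta_i$ lies in $\OI\cup\partial\OI$ since $\OI\cup\partial\OI$ is an order ideal'' is also incorrect: the order-ideal property constrains divisors of elements \emph{in} $\OI\cup\partial\OI$, and $x_\ell\beta_i$ need not lie there.) The index argument therefore does not bound $\deg(\beta_j)$ from below for an arbitrary border divisor $\beta_j$; crucially, you never invoke the hypothesis $\delta\in\mult_{\beta_j}$ in this step, and without it the desired bound fails.

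\textbf{Second}, the final step contains an arithmetic slip: from $\deg(\delta)+\deg(\beta_j)=1+\deg(\beta_i)$, the case $\deg(\beta_j)=\deg(\beta_i)$ gives $\deg(\delta)=1$, not $\delta=1$. Equal-degree non-multiplicative couples genuinely occur (e.g.\ $(b_1,b_2)$ in Example~\ref{ex:NumSpoli}), so $j>i$ cannot be extracted from degree considerations alone.

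The paper proceeds in the opposite order, and each step is immediate. First $j>i$ comes directly from the definition of $\mult_{\beta_j}$: since $\beta_i\mid x_\ell\beta_i=\delta\beta_j$ and $\delta\in\mult_{\beta_j}$ forbids $\beta_k\mid\delta\beta_j$ for every $k>j$, we must have $i\le j$; and $i=j$ would force $\delta=x_\ell\notin\mult_{\beta_i}=\mult_{\beta_j}$, a contradiction. Then the degree ordering on $\partial\OI$ gives $\deg(\beta_j)\ge\deg(\beta_i)$, whence $\deg(\delta)=1+\deg(\beta_i)-\deg(\beta_j)\le 1$.
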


\begin{proof}
By definition of $\mult_{\beta_j}$ for a border reduction structure (Definition \ref{def:BRS}), the equality $x_\ell\beta_i=\delta \beta_j$, with $x_l\notin \mult_{\beta_i}$ and $\delta \in \mult_{\beta_j}$, implies that  $j>i$. Furthermore $\deg(\beta_i)\leq \deg(\beta_j)$, since in $\RS_{\partial \OI,\deg}$ the terms of $\delta \OI$ are ordered according to increasing degree. This implies that $\deg(\delta)\leq \deg(x_\ell)=1$.
\end{proof}

As already observed in Remark \ref{rem:otherBSs1}, $\RS_{\partial \OI,\deg}$ is a substructure of $\RS'$. So,  we can rephrase Buchberger's criterion for border marked bases given in \cite[Proposition 6.4.34]{KRvol2} in terms of the reduction relation given by $\RS_{\partial \OI,\deg}$.

\begin{proposition} \rm{(Buchberger's criterion for  border marked bases)} \cite[Proposition 6.4.34]{KRvol2}\label{prop:Buchborder} 
Let $\OI$ 
be a finite order ideal, and consider the reduction structure $\RS_{\partial\OI,\deg}$.
Let $B$  be a $\partial \OI$-marked set. 
The following are equivalent:
\begin{enumerate}
\item \label{it:Sneigh1}$B$ is a $\partial \OI$-marked basis; 
\item \label{it:Sneigh3}for every $(b,b')$ such $(\Ht(b),\Ht(b'))$ is a  neighbour couple, $S(b,b')\ridc{B\,\RS_{\partial\OI,\deg}} 0$.
\end{enumerate}
\end{proposition}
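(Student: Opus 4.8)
The plan is to obtain the statement as a faithful translation of the classical Buchberger criterion for border bases, \cite[Proposition 6.4.34]{KRvol2}, into the language of the reduction relation attached to $\RS_{\partial\OI,\deg}$. To do this I would set up three dictionaries: (i) between $\ridc{B\,\RS_{\partial\OI,\deg}}$ and the border division algorithm of \cite[Proposition 6.4.11]{KRvol2}; (ii) between the neighbour couples of Definition \ref{def:ncouple} and the neighbours of \cite[Definition 6.4.33 c)]{KRvol2}; and (iii) between $\partial\OI$-marked bases and border bases. Each of these is already essentially in place in the text preceding the statement, so the proof is really a matter of assembling them, and there is no deep obstacle — the proposition is by design a rephrasing.

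For (i): by Remark \ref{rem:otherBSs1} the structure $\RS_{\partial\OI,\deg}$ is a substructure of the structure $\RS'$ whose reduction relation is the border rewrite relation of \cite{KRvol2}, and, as recorded in the paragraph preceding the statement, $\ridc{B\,\RS_{\partial\OI,\deg}}$ computes exactly the normal $\OI$-remainders of the border division algorithm. Since $\RS_{\partial\OI,\deg}$ is Noetherian and confluent for every $\partial\OI$-marked set $B$, the $\OI$-reduced form under $\ridc{B\,\RS_{\partial\OI,\deg}}$ is unique; hence the assertion ``$S(b,b')\ridc{B\,\RS_{\partial\OI,\deg}}0$'' is unambiguous and coincides with ``the normal $\OI$-remainder of $S(b,b')$ is $0$''. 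For (ii): reading Definition \ref{def:ncouple} with the finite set taken to be $\partial\OI$ recovers verbatim the two cases $\alpha=x_j\alpha'$ and $x_i\alpha=x_j\alpha'$, i.e.\ the ``next door'' and ``across the street'' neighbours of \cite[Definition 6.4.33 c)]{KRvol2}, so condition \eqref{it:Sneigh3} is literally the hypothesis imposed on the border prebasis $B$ in \cite[Proposition 6.4.34]{KRvol2}. For (iii): a $\partial\OI$-marked set is by definition a border prebasis, and $(B)\oplus\langle\OI\rangle_A=\PR_A$ is precisely the statement that $\OI$ is an $A$-module basis of $\PR_A/(B)$, i.e.\ that $B$ is a border basis; thus \eqref{it:Sneigh1} is the same condition as ``$B$ is a border basis''. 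Combining (i), (ii), (iii), the equivalence \eqref{it:Sneigh1}$\Leftrightarrow$\eqref{it:Sneigh3} is exactly \cite[Proposition 6.4.34]{KRvol2}.

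As a partial cross-check independent of \cite{KRvol2}, note that \eqref{it:Sneigh1}$\Rightarrow$\eqref{it:Sneigh3} is immediate: if $B$ is a $\partial\OI$-marked basis then every $g\in\PR_A$ has a unique $\OI$-normal form modulo $(B)$ (Remark \ref{rem:crucial}), which is $0$ when $g\in(B)$; since $\RS_{\partial\OI,\deg}$ is Noetherian and confluent, any $\OI$-reduced form reachable from $g$ by $\ridc{B\,\RS_{\partial\OI,\deg}}$ equals that normal form, and $S(b,b')\in(B)$ gives $S(b,b')\ridc{B\,\RS_{\partial\OI,\deg}}0$. The content is thus carried by the converse, which is imported from \cite[Proposition 6.4.34]{KRvol2}.

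The one point that I would spell out carefully — the closest thing here to an obstacle — is the base ring: \cite{KRvol2} develops border bases over a field, whereas here the coefficients lie in the Noetherian $K$-algebra $A$. This is harmless but must be checked. The border division algorithm and the argument of \cite[Proposition 6.4.34]{KRvol2} only perform operations of the form ``subtract an element of $A$ times a term-shift of some $b_\beta$'', which make equal sense over $\PR_A$; monicity of the head terms $\beta\in\partial\OI$ is exactly what makes these operations well-defined and what guarantees that the notion of $\partial\OI$-marked set and the relevant direct-sum decompositions behave over $A$ as they do over $K$ (this is the same monicity observation already invoked in the Introduction for the functorial arguments). Hence the proof of \cite[Proposition 6.4.34]{KRvol2} goes through verbatim over $\PR_A$, and the proposition follows.
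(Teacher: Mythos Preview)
Your proposal is correct and takes essentially the same approach as the paper: the paper does not prove this proposition at all but simply cites \cite[Proposition 6.4.34]{KRvol2} and notes (via Remark \ref{rem:otherBSs1} and the paragraph before the statement) that it can be rephrased in terms of $\RS_{\partial\OI,\deg}$. Your write-up is in fact more thorough, since you spell out the three dictionaries and address the passage from a field to a Noetherian $K$-algebra, which the paper leaves implicit.
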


Applying \cite[Theorem 11.6]{CMR} to the reduction structure $\RS_{\partial\OI,\prec}$, for some term order $\prec$, an alternative Buchberger's criterion can be obtained. However, this criterion might involve couples of terms in $\partial \OI$ which are neither neighbour, nor  non-multiplicative, unless  $\RS_{\partial\OI,\prec}$ has multiplicative variables.  For instance,  if $\prec_{lex}$ is the lex term order, then the reduction structure $\RS_{\partial\OI,\prec_{lex}}$ has multiplicative variables (see \cite[Theorem 13.5]{CMR}). 

In general, the reduction structure $\RS_{\partial\OI,\deg}$ does not have multiplicative variables, not even if we consider  $\RS_{\partial\OI,\prec}$ for a degree compatible term order \cite[Example 13.4]{CMR}. Nevertheless, we now show that  it is sufficient to consider non-multiplicative couples of terms $\partial \OI$ in $\RS_{\partial \OI,\deg}$ in order to obtain another Buchberger's criterion for border bases. In terms  of continuous involutive division, this result also follows from results by Riquier and Janet (see \cite{Janet}, \cite[Definitions 3.1 and 4.9, Theorem 6.5]{GB}). For the sake of completeness, we give a complete proof using the notions adopted in the present paper. More explicitely, we prove the equivalence of the following Proposition to Proposition \ref{prop:Buchborder}.

\begin{proposition}\label{prop:Spolivarmult}
Let $\OI$ be a finite order ideal, and consider the reduction structure $\RS_{\partial\OI,\deg}$.
Let $B$  be a $\partial \OI$-marked set.
The following are equivalent:
\begin{enumerate}
\item \label{it:Sneigh2}$B$ is a $\partial \OI$-marked basis; 
\item \label{it:Sneigh4}  for every $(b,b')$ such $(\Ht(b),\Ht(b'))$ is a non-multiplicative couple, $S(b,b')\ridc{B\,\RS_{\partial\OI,\deg}} 0$.
\end{enumerate}
\end{proposition}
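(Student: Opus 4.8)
The plan is to derive the statement from the already established Buchberger criterion of Proposition~\ref{prop:Buchborder}. Since item~(1) coincides in the two propositions, it suffices to show that, for a $\partial\OI$-marked set $B$, the non-multiplicative-couple condition~\eqref{it:Sneigh4} is equivalent to the neighbour-couple condition~\eqref{it:Sneigh3} of Proposition~\ref{prop:Buchborder}. One direction is immediate: by Lemma~\ref{lem:nonmultBorder} a non-multiplicative couple $(\beta_i,\beta_j)$ of $\RS_{\partial\OI,\deg}$ satisfies $x_\ell\beta_i=\delta\beta_j$ with $\deg(\delta)\le 1$, hence either $\beta_j=x_\ell\beta_i$ or $x_\ell\beta_i=x_m\beta_j$, and in both cases $(\beta_i,\beta_j)$ is a neighbour couple in the sense of Definition~\ref{def:ncouple}. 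So condition~\eqref{it:Sneigh3} implies~\eqref{it:Sneigh4}, and combined with Proposition~\ref{prop:Buchborder} this yields the implication $(1)\Rightarrow(2)$ of the present statement.

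For the converse I would assume~\eqref{it:Sneigh4} and prove that every neighbour couple $S$-polynomial reduces to $0$, so that Proposition~\ref{prop:Buchborder} finishes the job. Two preliminary facts do the work. First, for any neighbour or non-multiplicative couple $(\beta,\beta')$ of $\RS_{\partial\OI,\deg}$, if $\eta\beta=\eta'\beta'=lcm(\beta,\beta')$ then $\deg(\eta),\deg(\eta')\le 1$: for neighbour couples this is Definition~\ref{def:ncouple}, and for non-multiplicative couples it follows from Lemma~\ref{lem:nonmultBorder} together with the elementary observation that in that case $lcm(\beta_i,\beta_j)=x_\ell\beta_i$. Since $\supp(b)\subseteq\{\beta\}\cup\OI$ for $b\in B$ marked on $\beta$, and $x\cdot\OI\subseteq\OI\cup\partial\OI$ for every variable $x$ by definition of the border, this forces $\supp\bigl(S(b,b')\bigr)\subseteq\OI\cup\partial\OI$, the two copies of $lcm(\beta,\beta')$ cancelling. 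Second, on the $A$-module $\langle\OI\cup\partial\OI\rangle_A$ the reduction $\ridc{B\,\RS_{\partial\OI,\deg}}$ is completely explicit: since the terms of $\partial\OI$ are ordered by degree in $\RS_{\partial\OI,\deg}$, no $\beta_{k'}$ with $k'>k$ divides $\beta_k$ (that would force $\beta_{k'}=\beta_k$), so $\beta_k$ lies only in the cone $\cone_{\mult_{\beta_k}}(\beta_k)$; hence reducing all border terms occurring in $g\in\langle\OI\cup\partial\OI\rangle_A$ rewrites it into $\rho(g):=g-\sum_{\beta_k\in\partial\OI}c_k(g)\,b_k\in\langle\OI\rangle_A$, where $c_k(g)\in A$ is the coefficient of $\beta_k$ in $g$. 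The map $\rho$ is $A$-linear, and by confluence of $\ridc{B\,\RS_{\partial\OI,\deg}}$ it is the unique $\OI$-reduced form of $g$ reachable by reduction; in particular, for $g$ supported on $\OI\cup\partial\OI$, one has $g\ridc{B\,\RS_{\partial\OI,\deg}}0$ if and only if $\rho(g)=0$.

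With these in hand I would run a short case analysis on a neighbour couple $(\beta_i,\beta_j)$ with $i<j$ (so $\deg\beta_i\le\deg\beta_j$), setting $\mu:=lcm(\beta_i,\beta_j)$. By Definition~\ref{def:ncouple} and the degree ordering, either $\mu=\beta_j=x_s\beta_i$, or $\mu=x_s\beta_i=x_t\beta_j$ with $\deg\beta_i=\deg\beta_j$; in both cases $\beta_j\mid x_s\beta_i$ with $j>i$, hence $x_s\notin\mult_{\beta_i}$. If $\mu\in\cone_{\mult_{\beta_j}}(\beta_j)$ — automatic in the first case, equivalent to $x_t\in\mult_{\beta_j}$ in the second — then $(\beta_i,\beta_j)$ is itself a non-multiplicative couple (Definition~\ref{def:nonMultc}) and $S(b_i,b_j)\ridc{B\,\RS_{\partial\OI,\deg}}0$ by~\eqref{it:Sneigh4}. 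Otherwise we are in the second case with $x_t\notin\mult_{\beta_j}$; I would set $K:=\max\{k:\beta_k\mid\mu\}$ and note $K>j$ (if $K=j$, no $\beta_k$ with $k>j$ divides $\mu=\beta_j x_t$, forcing $x_t\in\mult_{\beta_j}$, a contradiction). Putting $\xi:=\mu/\beta_K$, the chain $\deg\beta_i\le\deg\beta_j\le\deg\beta_K\le\deg\mu=\deg\beta_i+1$ gives $\deg\xi\le 1$; maximality of $K$ gives $\xi\in\mult_{\beta_K}$; and $lcm(\beta_i,\beta_K)=lcm(\beta_j,\beta_K)=\mu$, since both divide $\mu$, are proper multiples of $\beta_i$, $\beta_j$ respectively, and have degree at most $\deg\mu$. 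Hence $(\beta_i,\beta_K)$ and $(\beta_j,\beta_K)$ are non-multiplicative couples with $S(b_i,b_K)=x_s b_i-\xi b_K$ and $S(b_j,b_K)=x_t b_j-\xi b_K$, both reducing to $0$ by~\eqref{it:Sneigh4}; since $S(b_i,b_j)=x_s b_i-x_t b_j=S(b_i,b_K)-S(b_j,b_K)$ and all three polynomials lie in $\langle\OI\cup\partial\OI\rangle_A$, $A$-linearity of $\rho$ gives $\rho\bigl(S(b_i,b_j)\bigr)=\rho\bigl(S(b_i,b_K)\bigr)-\rho\bigl(S(b_j,b_K)\bigr)=0$, i.e.\ $S(b_i,b_j)\ridc{B\,\RS_{\partial\OI,\deg}}0$. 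Thus every neighbour couple $S$-polynomial reduces to $0$, and Proposition~\ref{prop:Buchborder} concludes that $B$ is a $\partial\OI$-marked basis.

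I expect the crux to be precisely the final combining step, since ``reduces to $0$'' is not additive for a general reduction relation: passing from $S(b_i,b_K)\ridc{B\,\RS_{\partial\OI,\deg}}0$ and $S(b_j,b_K)\ridc{B\,\RS_{\partial\OI,\deg}}0$ to $S(b_i,b_j)\ridc{B\,\RS_{\partial\OI,\deg}}0$ needs the explicit $A$-linear description of $\OI$-reduced forms on $\langle\OI\cup\partial\OI\rangle_A$, which in turn rests on the degree ordering of $\partial\OI$ in $\RS_{\partial\OI,\deg}$ and on the fact (via Lemma~\ref{lem:nonmultBorder} and Definition~\ref{def:ncouple}) that all the relevant $S$-polynomials stay inside $\langle\OI\cup\partial\OI\rangle_A$. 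The remaining bookkeeping — selecting the border term $\beta_K$ of largest index dividing $\mu$ and checking that $(\beta_i,\beta_K)$ and $(\beta_j,\beta_K)$ are non-multiplicative with the displayed $S$-polynomials — is routine once the degree inequalities above are recorded.
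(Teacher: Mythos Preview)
Your proof is correct and follows essentially the same route as the paper: reduce to Proposition~\ref{prop:Buchborder}, use Lemma~\ref{lem:nonmultBorder} for one implication, and for the other split neighbour couples into those that are already non-multiplicative and those that factor through a third border term $\beta_K$ of maximal index dividing $\mu$, then write $S(b_i,b_j)=S(b_i,b_K)-S(b_j,b_K)$. The one substantive addition you make is the explicit $A$-linear description of the reduced form via $\rho$ on $\langle\OI\cup\partial\OI\rangle_A$; the paper handles this final combining step more tersely, invoking confluence of $\ridc{B\,\RS_{\partial\OI,\deg}}$ and citing \cite[Proposition~4.5]{KR2008}, so your version is a welcome unpacking of that reference rather than a different argument.
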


\begin{proof}
Here, we prove the equivalence between Proposition \ref{prop:Buchborder}\eqref{it:Sneigh3} and item \eqref{it:Sneigh4}. By Lemma~\ref{lem:nonmultBorder}, one implication is immediate, so we move to proving that item $\eqref{it:Sneigh4} $ implies Proposition~\ref{prop:Buchborder}\eqref{it:Sneigh3}. 

Consider now a neighbour couple $(\beta_i,\beta_j)$ which is not a non-multiplicative couple. Since $\partial \OI$ is increasingly ordered by degree, if $x_s\beta_i=\beta_j$ then $j>i$, hence  $x_s\notin \mult_{\beta_i}$ by definition of $\mult$ as given in Definition \ref{def:BRS}. Furthermore, if $x_s\beta_i=x_l\beta_j$, then it is not possible that $x_s$ belongs to $\mult_{\beta_i}$ and simultaneously $x_\ell\in \mult_{\beta_j}$, again by Definition \ref{def:BRS}.

Hence, the only case of neighbour couple which is not non-multiplicative is 
$x_s\beta_i=x_l\beta_j$ with $x_s\notin \mult_{\beta_i}$ and $x_\ell\notin \mult_{\beta_j}$. Let $\beta_k$ be the unique term in $\partial \OI$ such that $x_s\beta_i=x_\ell\beta_j=\delta \beta_k$ with $\delta \in \mult_{\beta_k}$. Hence both $(\beta_i,\beta_k)$ and $(\beta_j,\beta_k)$ are non-multiplicative couples and by Lemma  \ref{lem:nonmultBorder} we obtain that $i,j<k$ and $\deg(\delta)\leq1$. Let $b_i,b_j,b_k\in B$ be the marked polynomials such that $\Ht(b_i)=\beta_i$, $\Ht(b_j)=\beta_j$, and $\Ht(b_k)=\beta_k$.
Since $S(b_i,b_k)$ and $S(b_j,b_k)$ reduce to 0 by~$\ridc{B\,\RS_{\partial\OI,\deg}}$, then also $S(b_i,b_j)$ reduces to 0 by $\ridc{B\,\RS_{\partial\OI,\deg}}$, because $S(b_i,b_j)=S(b_i,b_k)-S(b_j,b_k)$ and $\ridc{B\,\RS_{\partial\OI,\deg}}$ is confluent (see also \cite[Proposition 4.5]{KR2008}).
\end{proof}

\begin{remark}
The most used effective criterion to check whether a $\partial \OI$-marked set is a basis is not a Buchberger's criterion. The commutativity of formal multiplication matrices is usually preferred (see \cite[Theorem 3.1]{Mourrain1999}). This is simply due to the fact that the border reduction given in \cite{KRvol2} is neither Noetherian nor  confluent, hence when in a statement one finds \lq\lq $f\ridc{B\,\RS_{\partial\OI}} 0$\rq\rq, one should read \lq\lq it is possible to find a border reduction path leading $f$ to 0\rq\rq. 
\end{remark}

\begin{example}\cite[Example 4.6]{KR2008}\label{ex:NumSpoli}
Consider the finite order ideal $\OI=\{1,x_1,x_2,x_3,x_2x_3\}\subset \PR=K[x_1,x_2,x_3]$, so that  $\partial \OI:=\{b_1,\dots,b_8\}$ with $b_1=x_1^2$, $b_2=x_1x_2$, $b_3=x_1x_3$, $b_4=x_2^2$, $b_5=x_3^2$, $b_6=x_1x_2x_3$, $b_7=x_2^2x_3$, and $b_8=x_2x_3^2$, while $\PO=\partial\OI\setminus \{b_8\}=\{b_1,\dots,b_7\}$.  In this case the reduction structure $\RS_{\partial\OI,\deg}$ has multiplicative variables.

Inspired by Ufnarovski graphs \cite[Definition 48.1.1]{SPES4}, similarly to what is done in \cite[Definition 8.5]{Stetter}, in Figures \ref{fig:ExampleFig1}, \ref{fig:ExampleFig2}, \ref{fig:ExampleFig3} we represent graphs whose vertices are the terms in $\partial \OI$ (we use bullets for the terms in $\PO$ and a star for the unique term in $\partial\OI\setminus \PO$), and whose edges are given  by either neighbour couples of terms in $\partial \OI$ (Figure \ref{fig:ExampleFig1}), or non-multiplicative couples of terms $\partial \OI$ in $\RS_{\partial \OI,\deg}$ (Figure \ref{fig:ExampleFig2}) or non-multiplicative couples of terms $\PO$ in $\RS_{\PO}$ (Figure \ref{fig:ExampleFig3}). In Figures \ref{fig:ExampleFig2} and \ref{fig:ExampleFig3}, we use arrows for edges; the arrow starts from $b_i$ and ends at $b_j$ if $(b_i,b_j)$ is a non-multiplicative couple with $x_\ell b_i=\delta b_j$. 

\begin{figure}[h]
\begin{tikzpicture}[decoration={
markings,
mark=between positions 0 and 1 step 4mm with {\arrow{stealth}}},
p/.style={circle,fill,
inner sep=0pt,minimum size=1.5mm},
b/.style={star,fill,
inner sep=0pt,minimum size=3mm}]
\node(A) at ( 3,3) [p, label=above:$b_1$] {};
\node (B) at ( 1.85,1.7) [p,label=left:$b_3$] {}; 
\node (C) at ( 4.15,1.7) [p,label=right:$b_2$] {}; 
\node (H) at ( 3,1)  [p,label=above:$b_6$]{};
\node (D) at ( 0.5,0) [p,label=below left:$b_5$] {};
\node (E) at ( 2,0) [b,label=below:$b_8$] {};
\node (F) at ( 4,0)  [p,label=below right:$b_7$]{};
\node (G) at (5.5,0)  [p,label=below right:$b_4$]{};
\draw (A) -- (B);
\draw (A) -- (C);
\draw (B) -- (H);
\draw (C) -- (H);
\draw (B) -- (D);
\draw (D) -- (E);
\draw (E) -- (H);
\draw (E) -- (F);
\draw (F) -- (H);
\draw (F) -- (G);
\draw (C) -- (G);
\draw (B) -- (C);
\end{tikzpicture}
\caption{Neighbour couples for $\RS_{\partial\OI,\deg}$}
\label{fig:ExampleFig1}
\end{figure}

\begin{figure}[h]
\begin{minipage}[b]{0.48\linewidth}
\centering
\begin{tikzpicture}[decoration={
markings,
mark=between positions 0 and 1 step 4mm with {\arrow{stealth}}},
p/.style={circle,fill,
inner sep=0pt,minimum size=1.5mm},
b/.style={star,fill,
inner sep=0pt,minimum size=3mm}]
\node(A) at ( 3,3) [p, label=above:$b_1$] {};
\node (B) at ( 1.85,1.7) [p,label=left:$b_3$] {};
\node (C) at ( 4.15,1.7) [p,label=right:$b_2$] {};
\node (H) at ( 3,1)  [p,label=above:$b_6$]{};
\node (D) at ( 0.5,0) [p,label=below left:$b_5$] {};
\node (E) at ( 2,0) [b,label=below:$b_8$] {};
\node (F) at ( 4,0)  [p,label=below right:$b_7$]{};
\node (G) at (5.5,0)  [p,label=below right:$b_4$]{};
\draw  [-{Latex[length=3mm]}](A) -- (B);
\draw  [-{Latex[length=3mm]}](A) -- (C);
\draw [-{Latex[length=3mm]}] (B) -- (H);
\draw  [-{Latex[length=3mm]}](C) -- (H);
\draw [-{Latex[length=3mm]}] (B) -- (D);
\draw  [-{Latex[length=3mm]}](D) -- (E);
\draw  [-{Latex[length=3mm]}](H) -- (E);
\draw  [-{Latex[length=3mm]}](F) -- (E);
\draw  [-{Latex[length=3mm]}](H) -- (F);
\draw [-{Latex[length=3mm]}] (G) -- (F);
\draw [-{Latex[length=3mm]}] (C) -- (G);
\end{tikzpicture}
\caption{Non-multiplicative couples for $\RS_{\partial\OI,\deg}$}
\label{fig:ExampleFig2}
\end{minipage}
\begin{minipage}[b]{0.48\linewidth}
\centering
\begin{tikzpicture}[decoration={
markings,
mark=between positions 0 and 1 step 4mm with {\arrow{stealth}}},
p/.style={circle,fill,
inner sep=0pt,minimum size=1.5mm},
b/.style={star,fill,
inner sep=0pt,minimum size=3mm}]
\node(A) at ( 3,3) [p, label=above:$b_1$] {};
\node (B) at ( 1.85,1.7)  [p,label=left:$b_3$] {};
\node (C) at ( 4.15,1.7) [p,label=right:$b_2$] {};
\node (H) at ( 3,1)  [p,label=above:$b_6$]{};
\node (D) at ( 0.5,0) [p,label=below left:$b_5$] {};
\node (E) at ( 2,0) [b,label=below:$b_8$] {};
\node (F) at ( 4,0)  [p,label=below right:$b_7$]{};
\node (G) at (5.5,0)  [p,label=below right:$b_4$]{};
\draw [-{Latex[length=3mm]}](A) -- (B);
\draw [-{Latex[length=3mm]}](A) -- (C);
\draw [-{Latex[length=3mm]}](B) -- (H);
\draw[-{Latex[length=3mm]}] (C) -- (H);
\draw[-{Latex[length=3mm]}] (B) -- (D);
\draw[-{Latex[length=3mm]}] (H)--(D);
\draw[-{Latex[length=3mm]}] (F)  to [out=220, in=320] (D);
\draw[-{Latex[length=3mm]}] (H) -- (F);
\draw [-{Latex[length=3mm]}](G) -- (F);
\draw[-{Latex[length=3mm]}] (C) -- (G);
\end{tikzpicture}
\caption{Non-multiplicative couples for $\RS_{\PO}$}
\label{fig:ExampleFig3}
\end{minipage}
\end{figure}
\end{example}

\begin{remark}
Observe that the non-multiplicative couples of terms in $\PO$ of $\RS_{\PO}$ which are not neighbour  couples of terms in $\partial \OI$ of $\RS_{\partial\OI,\deg}$ are such that $x_\ell b_i=\delta b_j$ with $\deg(\delta)=s>1$. 
In this case, there are  $b_{i_2},\dots b_{i_{s-1}}\in \partial \OI\setminus \PO$ such that $(b_i,b_{i_2})$, $(b_{i_{s-1}},b_j)$ and , for every $t\geq 2$, $(b_{i_t},b_{i_{t+1}})$  are  neighbour couples of terms in $\partial \OI$ of $\RS_{\partial\OI,\deg}$. In Example \ref{ex:NumSpoli}, this is the case for the non-multiplicative couples $(b_6,b_5)$ and $(b_7,b_5)$ of terms in $\PO$ of $\RS_{\PO}$.\\
This means that in general there are less $S$-polynomials to consider in Proposition \ref{prop:Buchpomm}\eqref{it:Snonmult} than those in Proposition \ref{prop:Buchborder}\eqref{it:Sneigh3} or Proposition \ref{prop:Spolivarmult}\eqref{it:Sneigh4}.
\end{remark}

\section{Border and Pommaret marked functors and their representing schemes}\label{sec:functors}

\emph{From now, for what concerns border reduction structures, given a finite order ideal $\OI$, we only consider the border reduction structure $\RS_{\partial \OI,\deg}$, which will be simply denoted by $\RS_{\partial\OI}$.}

Given a finite order ideal $\OI$, the goal of this section is proving that the two affine schemes, which respectively parameterize $\partial \OI$-marked bases and  $\PO$-marked bases, are isomorphic. 

\begin{definition}\cite[Appendix A]{CMR}\label{def:func}
Let $\OI$ be a finite order ideal and $\RS=(\head, \tail, \mult)$ a reduction structure with $(\head)=(\Tn\setminus \OI)$. The functor
\[
\MS{\RS}: \text{ Noeth-}k\text{-Alg }\longrightarrow \text{ Sets},
\]
associates to every Noetherian $k$-Algebra $A$ the set $\MS{\RS}(A)$ consisting of all the  ideals $I\subset \PR_A$ generated by an $\head$-marked set, and to every morphism of Noetherian $k$-algebras $\phi: A\rightarrow A'$ the morphism $\MS{\RS}(\phi):\MS{\RS}(A)\rightarrow \MS{\RS}(A')$ that operates in the following natural way:
\[
\MS{\RS}(\phi)(I)=I\otimes_A A'.
\]
The following subfunctor of $\MS{\RS}$
\[
\MB{\RS}: \text{ Noeth-}k\text{-Alg }\longrightarrow \text{ Sets},
\]
associates to every Noetherian $k$-Algebra $A$ the set $\MB{\RS}(A)$ consisting of all the ideals $I\subset \PR_A$ generated by an $\head$-marked basis, and to every morphism of Noetherian $k$-algebras $\phi: A\rightarrow A'$ the morphism $\MB{\RS}(\phi):\MB{\RS}(A)\rightarrow \MB{\RS}(A')$ that operates in the following natural way:
\[
\MB{\RS}(\phi)(I)=I\otimes_A A'.
\]
\end{definition}

\begin{remark}
The monicity of marked sets and bases ensures that marked sets and bases are preserved by extension of scalars (see also \cite[Lemmas A.1 and  A.2]{CMR}).
\end{remark}

The functor $\MS{\RS}$ is the functor of points of an  affine scheme $\MSsch{\RS}$ isomorphic to $\mathbb A^N$, for a suitable integer $N$. More precisely, if $\tail_\alpha=\OI$ for every $\alpha\in \head$, then $\MS{\RS}$ is the functor of points of the affine scheme $\MSsch{\RS}\simeq \mathbb A^{\vert \head\vert \cdot \vert \OI\vert}$ \cite[Lemma A.1]{CMR}.

We now investigate $\MB{\RS_{\partial \OI}}$ and $\MB{\RS_{\PO}}$ more in detail  in order to understand the relation among them. 

\begin{definition}
Let $\OI=\{\sigma_1,\dots, \sigma_\ell\}$ be a finite order ideal and  $\RS=(\head, \tail, \mult)$ any reduction structure with $\head=\{\tau_1,\dots, \tau_m\}$ a set of generators of the ideal $(\Tn\setminus \OI)$ and $\tail_\alpha:=\OI$, for every $\alpha\in \head$. Let $C:=\{C_{ij}\vert 1\leq i\leq m,  1\leq j\leq  \ell\}$ be a new set of parameters.
The \emph{generic $\head$-marked set} is the set of marked polynomials $\{g_1,\dots,g_m\}\subset \PR_{K[C]}$ where
\[
g_i=\tau_i-\sum_{j=1}^\ell C_{ij}\sigma_j.
\]
\end{definition}

Observe that the generic $\mathcal H$-marked set is a $\mathcal H$-marked set in $\PR_A$ with $A=K[C]$.

The generic $\partial \OI$-marked set was first introduced in \cite[Definition 3.1]{KR2008}. We denote it by $\mathscr B$, and observe that, up to relabelling the parameters $C$, $\mathscr B$ always contains the generic $\PO$-marked set, that we denote by $\mathscr P$. From now on we  denote by $\mathscr B'$ the set $\mathscr B\setminus \mathscr P$ and by $\tilde C$ the set of parameters in the polynomials of $\mathscr B'$.

\begin{example}\label{ex:genMset}
We consider again the finite order ideal $\OI=\{1,x_1,x_2,x_1x_2\}$. Then, the generic $\partial \OI$-marked set $\mathscr B$ is given by the following polynomials:
\[b_1:=x_{{1}}^
{2}-C_{{1,1}}-C_{{1,2}}x_{{1}}-C_{{1,3}}x_{{2}}-C_{{1,4}}x_{{1}}x_{{2}},\quad
b_2:=x_{{2}}^{2}-C_{{2,1}}-C_{{2,2}}x_{{1}}-C_{{2,3}}x_{{2}}-C_{{2,4}}x_{{1}}x_{{2}},\]
\[b_3=x_{{1}}^{2}x_{{2}}-C_{{3,1}}-C_{{3,2}}x_{{1}}-x_{{2}}C_{{3,3}}-C_{{3,4}}x_{{1}}x_{{2}},\quad
b_4:=x_{{1}}x_{{2}}^{2}-C_{{4,1}}-C_{{4,2}}x_{{1}}-C_{{4,3}}x_{{2}}-C_{{4,4}}x_{{1}}x_{{2}}.
\]
Observe that in this case the generic $\PO$-marked set $\mathscr P$  is equal to $\{b_1,b_2,b_3\}$, $\mathscr B'$ is equal to $\{b_4\}$, $C$ consists of 16 parameters and $\tilde C=\{C_{{4,1}},C_{{4,2}},C_{{4,3}},C_{{4,4}}\}$.
\end{example}

The functor $\MB{\partial O}$ (resp. $\MB{\PO}$) is the functor of points of a closed subscheme of $\MSsch{\partial \OI}=\mathbb A^{\vert \OI\vert \cdot\vert \partial O\vert}$ (resp.  $\MSsch{\PO}=\mathbb A^{\vert \OI\vert \cdot\vert \PO\vert}$), see \cite[Proposition 4.1]{KR2008} and \cite[Proposition 3]{Lederer} (resp. \cite[Theorem 6.6]{BCRAffine}).
We now recall the construction of the two subschemes representing the functors $\MB{\partial O}$ and $\MB{\PO}$.  

Given a polynomial in $K[C][x_1,\dots,x_n]$, the coefficients  in $K[C]$ of the terms in the variables $x_1,\dots,x_n$ are called {\em $\mathbf x$-coefficients}.

\begin{definition}\label{def:Pscheme}
Let $\mathscr P=\{b_i\in \mathscr B\vert \Ht(b_i)\in \PO\}\subset \mathscr B$ be the generic $\PO$-marked set.
For every  $p_r,p_s\in \mathscr P$ such that $(\Ht(p_r),\Ht(p_s))$ is a non-multiplicative couple, consider the $\OI$-reduced polynomial $h'_{rs}\in\langle \OI\rangle_A$ such that $S(p_k,p_s)\ridc{\mathscr P\,\RS_{\PO}} h'_{rs}$. 
Let $\mathfrak P\subset K[C\setminus \tilde C]$ be the ideal generated by the $\mathbf x$-coefficients of the polynomials $h'_{rs}$. 
The affine scheme $\Spec(K[C\setminus \tilde C]/\mathfrak P)$ is the \emph{Pommaret marked scheme} of $\OI$.
\end{definition}

\begin{theorem}\cite[Theorem 6.6]{BCRAffine}\label{thm:Pmarkedscheme}
$\MB{\RS_{\PO}}$ is the functor of points of $\Spec(K[C\setminus \tilde C]/\mathfrak P)$.
\end{theorem}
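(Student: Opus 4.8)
## Proof proposal

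The plan is to show that $\Spec(K[C\setminus\tilde C]/\mathfrak P)$ represents the functor $\MB{\RS_{\PO}}$ by exhibiting a natural bijection, for every Noetherian $k$-algebra $A$, between $\MB{\RS_{\PO}}(A)$ and the set $\mathrm{Hom}_{K\text{-Alg}}(K[C\setminus\tilde C]/\mathfrak P,\,A)$ of $A$-points. Since this is cited as \cite[Theorem 6.6]{BCRAffine} I would recall its proof in the language of the present paper. First I would note that $\MSsch{\RS_{\PO}}\simeq\mathbb A^{\vert\OI\vert\cdot\vert\PO\vert}=\Spec K[C\setminus\tilde C]$ already represents $\MS{\RS_{\PO}}$: an $A$-point is a choice of coefficients $(c_{ij})$, which is the same datum as a $\PO$-marked set $P\subset\PR_A$, hence the same as the ideal $(P)\in\MS{\RS_{\PO}}(A)$ it generates (the map $P\mapsto (P)$ is injective on marked sets because the head terms are fixed and the tails lie in $\langle\OI\rangle_A$, which meets $(P)$ only in $0$ is not needed here — injectivity of the parametrization is immediate from the marked form).

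Next I would show that under this identification the subfunctor $\MB{\RS_{\PO}}$ corresponds exactly to the closed subscheme cut out by $\mathfrak P$. By Proposition \ref{prop:Buchpomm} (Buchberger's criterion for Pommaret marked bases), a $\PO$-marked set $P$ over $A$ is a $\PO$-marked basis if and only if $S(p,p')\ridc{P\,\RS_{\PO}}0$ for every non-multiplicative couple $(\Ht(p),\Ht(p'))$. Applying the Noetherian and confluent reduction $\ridc{\mathscr P\,\RS_{\PO}}$ to the generic $S$-polynomials $S(p_r,p_s)$ over the universal ring $K[C\setminus\tilde C]$ produces $\OI$-reduced polynomials $h'_{rs}\in\langle\OI\rangle_{K[C\setminus\tilde C]}$, and this reduction is compatible with any specialization $K[C\setminus\tilde C]\to A$ because each reduction step only subtracts a scalar multiple of a shifted generator (here monicity of $\PO$ and of the marked polynomials is essential: the head coefficients are $1_K$, so reduction steps are preserved under base change, cf.\ the Remarks following Definition \ref{def:func}). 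Therefore, for a point $\varphi\colon K[C\setminus\tilde C]\to A$ with corresponding marked set $P$, one has $S(p_r,p_s)\ridc{P\,\RS_{\PO}}\varphi(h'_{rs})$, and $\varphi(h'_{rs})=0$ for all $r,s$ exactly when all the $\mathbf x$-coefficients of the $h'_{rs}$ — i.e.\ all generators of $\mathfrak P$ — lie in $\ker\varphi$. Thus $P$ is a marked basis iff $\varphi$ factors through $K[C\setminus\tilde C]/\mathfrak P$, which is precisely the statement that $\MB{\RS_{\PO}}$ is the functor of points of $\Spec(K[C\setminus\tilde C]/\mathfrak P)$. Naturality in $A$ is clear since base change acts on ideals by $-\otimes_A A'$ and on points by composition, and these are matched by the construction.

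The main obstacle is the specialization/base-change step: one must be sure that reducing the \emph{generic} $S$-polynomials and then specializing the parameters gives the same result as specializing first and then reducing, and that a non-multiplicative couple stays a non-multiplicative couple (this last point is purely combinatorial and independent of $A$, since $\RS_{\PO}$ and its multiplicative sets depend only on $\OI$). The compatibility of reduction with base change is exactly where the monicity hypothesis on marked polynomials is used — without it, leading coefficients could vanish under specialization and the reduction tree would change — and it is also what allows the whole argument, originally written over $\PR_K$ in \cite{CMR,BCRAffine}, to go through verbatim over $\PR_A$. Once this is in place, the equivalence ``marked basis $\Leftrightarrow$ all $h'_{rs}$ vanish $\Leftrightarrow$ point factors through $K[C\setminus\tilde C]/\mathfrak P$'' is a formal consequence of Proposition \ref{prop:Buchpomm}.
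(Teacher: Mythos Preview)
The paper does not actually prove this statement: Theorem~\ref{thm:Pmarkedscheme} is simply cited from \cite[Theorem 6.6]{BCRAffine} with no argument given. Your proposal is correct, and it follows exactly the template the paper \emph{does} spell out for the analogous border result in Theorem~\ref{thm:reprfuncB}: set up the bijection between $A$-points of the affine space of marked sets and $\PO$-marked sets, invoke the relevant Buchberger criterion (here Proposition~\ref{prop:Buchpomm}) to cut out the marked bases, and use monicity to ensure the generic reduction specializes correctly under any $K[C\setminus\tilde C]\to A$. Your explicit discussion of the base-change compatibility of $\ridc{\mathscr P\,\RS_{\PO}}$ and the observation that the non-multiplicative couples are purely combinatorial (hence independent of $A$) are exactly the points the paper alludes to in the remark after Definition~\ref{def:func}, so nothing is missing.
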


\begin{definition}\label{def:Bscheme}
Let $\mathscr B=\{b_i\}_{i=1,\dots,m}$ be the generic $\partial \OI$-marked set. For every $b_r,b_s\in \mathscr B$ such that $(\Ht(b_r),\Ht(b_s))$ is a neighbour couple, consider the $\OI$-reduced polynomial  such that $S(b_r,b_s)\ridc{ \mathscr B\,\RS_{\partial\OI}} h_{rs}$. 
Let $\mathfrak B\subset K[C]$ be the ideal generated by the $\mathbf x$-coefficients of the polynomials $h_{rs}$. 
The affine scheme $\Spec(K[C]/\mathfrak B)$ is the \emph{border marked scheme} of $\OI$.
\end{definition}

The border marked scheme was introduced in \cite{KR2008}, and it was further investigated in \cite{HM2011, Lederer}. 
In \cite{Lederer} the interested reader can find a different proof of the fact that $\MB{\RS_{\partial \OI}}$ is the functor of points of the border marked  scheme. In \cite[Definition 3.1]{KR2008}, although the functor is not explicitly defined, the authors define the border marked scheme using the commutativity of formal matrices.

For the sake of completeness, we explicitly prove that the affine scheme defined by the ideal $\mathfrak B$ of Definition \ref{def:Bscheme} represents the functor $\MB{\RS_{\partial \OI}}$ just like  the schemes in \cite[Definition 3.1]{KR2008}, \cite[Proposition 3]{Lederer}. Our proof is inspired by \cite[Appendix~A]{CMR}. 

\begin{theorem}\label{thm:reprfuncB} 
$\MB{\RS_{\partial \OI}}$ is the functor of points of $\Spec(K[C]/\mathfrak B)$.
\end{theorem}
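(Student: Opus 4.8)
The plan is to mimic the standard argument used for marked functors (as in \cite[Appendix~A]{CMR}), showing that the affine scheme $\Spec(K[C]/\mathfrak B)$ pro-represents $\MB{\RS_{\partial\OI}}$ by exhibiting a natural bijection between $\mathrm{Hom}_{K\text{-}\mathrm{Alg}}(K[C]/\mathfrak B, A)$ and $\MB{\RS_{\partial\OI}}(A)$ for every Noetherian $K$-algebra $A$. First I would recall that $\MSsch{\RS_{\partial\OI}}=\mathbb A^{|\OI|\cdot|\partial\OI|}=\Spec K[C]$ represents $\MS{\RS_{\partial\OI}}$: a $K$-algebra morphism $\varphi\colon K[C]\to A$ corresponds to the ideal generated by the $\partial\OI$-marked set obtained from the generic marked set $\mathscr B$ by specializing each $C_{ij}$ to $\varphi(C_{ij})$, and conversely every $\partial\OI$-marked set in $\PR_A$ arises this way from a unique $\varphi$ (by monicity, the tail coefficients are well-defined elements of $A$). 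So the task reduces to identifying, among all such $\varphi$, exactly those for which the resulting marked set is a marked \emph{basis}, and checking this cuts out the closed subscheme $V(\mathfrak B)$.

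The key step is to combine Buchberger's criterion (Proposition \ref{prop:Buchborder}) with the behaviour of reduction under specialization. Given $\varphi\colon K[C]\to A$, write $B_\varphi$ for the specialized marked set and $I_\varphi=(B_\varphi)$. By Proposition \ref{prop:Buchborder}, $B_\varphi$ is a $\partial\OI$-marked basis if and only if $S(b,b')\ridc{B_\varphi\,\RS_{\partial\OI}}0$ for every neighbour couple $(\Ht(b),\Ht(b'))$; and because the reduction relation $\ridc{\mathscr B\,\RS_{\partial\OI}}$ over $K[C]$ is Noetherian and confluent, computing the $\OI$-reduced form $h_{rs}$ of the generic $S$-polynomial and then specializing via $\varphi$ yields precisely the $\OI$-reduced form of $S(b_r,b_s)$ computed in $\PR_A$ — here one must note that the reduction steps only ever divide by head terms (coefficient $1_K$), so no denominators appear and specialization commutes with the whole reduction chain. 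Consequently $S(b_r,b_s)\ridc{B_\varphi\,\RS_{\partial\OI}}0$ holds if and only if $\varphi$ annihilates all $\mathbf x$-coefficients of $h_{rs}$, i.e.\ if and only if $\varphi(\mathfrak B)=0$, i.e.\ if and only if $\varphi$ factors through $K[C]/\mathfrak B$. Naturality in $A$ is then immediate, since extension of scalars $I\mapsto I\otimes_A A'$ corresponds on the $\mathrm{Hom}$ side to postcomposition of $\varphi$, and the whole construction is manifestly compatible with this.

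The main obstacle I anticipate is the careful verification that the $\OI$-reduced form of the generic $S$-polynomial specializes correctly — that is, that $\varphi\bigl(h_{rs}\bigr)$ really is an $\OI$-reduced form of $S(b_r,b_s)$ modulo $I_\varphi$ in $\PR_A$, and that (by confluence, Remark \ref{rem:crucial2} and the Noetherianity established for $\RS_{\partial\OI,\deg}$) this reduced form is unique so that the criterion is genuinely a closed condition. One subtlety is that confluence of $\ridc{\mathscr B\,\RS_{\partial\OI}}$ is a statement over $K[C]$, whereas marked-basis-ness of $B_\varphi$ is what Proposition \ref{prop:Buchborder} tests over $A$; the bridge is that each reduction step is a polynomial identity with coefficients in $K[C]$ that remains valid after applying the ring homomorphism $\varphi\otimes\mathrm{id}\colon \PR_{K[C]}\to\PR_A$, so any reduction chain upstairs maps to a reduction chain downstairs, and in particular $h_{rs}\mapsto 0$ forces $S(b_r^\varphi,b_s^\varphi)\to 0$. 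Finally I would record that, by Remark \ref{rem:crucial2} together with Noetherianity and confluence of $\RS_{\partial\OI}$, once all neighbour $S$-polynomials reduce to $0$ one gets $I_\varphi+\langle\OI\rangle_A=\PR_A$, and the directness of the sum then follows from Proposition \ref{prop:Buchborder}; this closes the equivalence and completes the identification of the functor of points.
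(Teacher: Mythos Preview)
Your proposal is correct and follows essentially the same approach as the paper: both establish the natural bijection $\mathrm{Hom}(K[C]/\mathfrak B,A)\leftrightarrow \MB{\RS_{\partial\OI}}(A)$ by specializing the generic marked set $\mathscr B$ and invoking Buchberger's criterion (Proposition~\ref{prop:Buchborder}) to see that a specialization is a marked basis precisely when the morphism kills $\mathfrak B$. You are in fact more explicit than the paper about why reduction commutes with specialization (monicity of head terms, Noetherianity and confluence of $\RS_{\partial\OI,\deg}$), points the paper's brief proof leaves largely implicit.
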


\begin{proof}
For every $K$-algebra $A$, there is a 1-1 correspondence between $\mathrm{Hom}(K[C]/\mathfrak B, A)$ and $\MB{\RS_{\partial \OI}}(A)$.
In fact, on the one hand we consider a morphism of $K$-algebras $\pi : K[C]/\mathfrak B\rightarrow A$ and extend it in the obvious way to a morphism between the polynomial rings $\PR_{K[C]/\mathfrak B}$ and $\PR_A$. Then, we  can associate to every such morphism $\pi$ the $\partial \OI$-marked basis $\pi(\mathscr B)$, where $\mathscr B$ is the generic $\partial \OI$-marked set. The ideal generated by $\pi(\mathscr B)$ belongs to $\MB{\RS_{\partial \OI}}(A)$.

On the other hand, every $\partial \OI$-marked basis $B=\{b_i=\tau_i-\sum c_{ij}\sigma_j\vert c_{ij}\in A\}_{i=1,\dots,m}\subset \PR_A$ generates an ideal belonging to $\MB{\RS_{\partial \OI}}(A)$  and defines a morphism $\pi_B: K[C]/\mathfrak B\rightarrow A$ given by $\pi_B (C_{i,j}) =c_{i,j}$, thanks to Proposition \ref{prop:Buchborder}.

This 1-1 correspondence commutes with the extension of scalars, because for every morphism $\phi : A\rightarrow A'$  and for every $\partial \OI$-marked basis $B$ we have $\phi(B) =\{\phi(b_i) =\tau_i-\sum\phi(c_{ij})\sigma_j \}$, hence $\pi_{\phi(B)} = \phi\circ \pi(B)$.
\end{proof}

The functorial approach combined with Theorem \ref{prop:BorderAndPommB} gives the following result.
 
\begin{corollary}\label{cor:isomorfismo}
For a given finite order ideal $\OI$, the border marked scheme and the Pommaret marked scheme are isomorphic.
\end{corollary}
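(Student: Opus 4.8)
The plan is to deduce Corollary \ref{cor:isomorfismo} by showing that the two functors $\MB{\RS_{\partial\OI}}$ and $\MB{\RS_{\PO}}$ are isomorphic, and then invoke the fact (Theorem \ref{thm:reprfuncB} and Theorem \ref{thm:Pmarkedscheme}) that these functors are represented by the border marked scheme and the Pommaret marked scheme respectively; since a functor determines its representing scheme up to unique isomorphism, an isomorphism of functors yields an isomorphism of schemes. So the real content is producing a natural isomorphism $\MB{\RS_{\partial\OI}}\Rightarrow\MB{\RS_{\PO}}$.

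First I would define the candidate natural transformation. For every Noetherian $K$-algebra $A$ and every ideal $I\in\MB{\RS_{\partial\OI}}(A)$, the ideal $I$ is generated by a (unique, by monicity) $\partial\OI$-marked basis $B$; inside $B$ sits the $\PO$-marked set $P$. By Theorem \ref{prop:BorderAndPommB}, $P$ is a $\PO$-marked basis and $B'=B\setminus P\subset(P)$, so $(P)=(B)=I$. Thus the assignment $I\mapsto I$ already makes sense — but I must check the codomains: the point is that the \emph{same} ideal $I$ is simultaneously an element of $\MB{\RS_{\PO}}(A)$. Conversely, given $J\in\MB{\RS_{\PO}}(A)$ generated by a $\PO$-marked basis $P$, I need to produce the unique $\partial\OI$-marked basis generating $J$: for each $\beta\in\partial\OI\setminus\PO$, take the $\OI$-normal form $\tilde b_\beta$ of $\beta$ modulo $(P)$ (which exists because $\RS_{\PO}$ is Noetherian and confluent, by Remark \ref{rem:crucial2} and Remark \ref{rem:crucial}), and set $b_\beta:=\beta-\tilde b_\beta\in(P)=J$; together with $P$ this is a $\partial\OI$-marked set $B$ with $B'\subset(P)$, hence a $\partial\OI$-marked basis by Theorem \ref{prop:BorderAndPommB}, and it generates $J$. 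These two assignments $I\mapsto I$ and $J\mapsto J$ are mutually inverse bijections of sets for each $A$ (uniqueness of the marked basis generating a given ideal makes this unambiguous).

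Next I would verify naturality, i.e. compatibility with extension of scalars. Given $\phi:A\to A'$, both functors send $I$ to $I\otimes_A A'$ on morphisms (Definition \ref{def:func}), so the squares commute tautologically once we know that the marked-set structures are preserved under $-\otimes_A A'$; this is exactly the monicity remark after Definition \ref{def:func} (and \cite[Lemmas A.1, A.2]{CMR}). Concretely, if $B$ is the $\partial\OI$-marked basis generating $I$, then $B\otimes_A A'$ is the $\partial\OI$-marked basis generating $I\otimes_A A'$, and its contained $\PO$-marked set is $P\otimes_A A'$, which generates $I\otimes_A A'$ over $A'$ as well — so the bijection is respected. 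Hence the natural transformation and its inverse are natural, giving an isomorphism of functors, and therefore an isomorphism of the representing affine schemes.

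The main obstacle, or rather the point that needs the most care, is the well-definedness of the two maps: one must be sure that for each $A$ the $\partial\OI$-marked basis generating a given ideal is \emph{unique}, so that ``take $P\subset B$'' and ``reconstruct $B$ from $P$'' are honest inverse operations rather than multivalued correspondences. Uniqueness of a marked basis generating a fixed ideal follows from the direct-sum decomposition $\PR_A=(F)\oplus\langle\OI\rangle_A$ together with monicity of the head terms (each generator is determined as its head term minus its $\OI$-component in this decomposition). Everything else — existence of normal forms, the back-and-forth passing through Theorem \ref{prop:BorderAndPommB}, and functoriality — is then essentially formal. I would close by remarking that this argument is insensitive to the chosen ordering of $\partial\OI$ used to define $\RS_{\partial\OI}$, consistent with the convention fixed just before this section.
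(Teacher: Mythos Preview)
Your proposal is correct and follows essentially the same approach as the paper: both argue that $\MB{\RS_{\partial\OI}}$ and $\MB{\RS_{\PO}}$ coincide as functors (via Theorem~\ref{prop:BorderAndPommB}) and then invoke representability (Theorems~\ref{thm:Pmarkedscheme} and~\ref{thm:reprfuncB}) together with Yoneda's Lemma. The paper's version is more streamlined because it observes directly that the two functors are literally \emph{equal} as functors of ideals---for every $A$ the sets $\MB{\RS_{\partial\OI}}(A)$ and $\MB{\RS_{\PO}}(A)$ are the same---so your explicit construction of the inverse transformation, the discussion of uniqueness of marked bases, and the naturality check are all subsumed in that single observation.
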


\begin{proof}
By Theorem \ref{prop:BorderAndPommB}, for every $K$-algebra $A$ the sets $\MB{\RS_{\partial\OI}}(A)$ and $\MB{\RS_{\PO}}(A)$ are equal. Hence, by Yoneda's Lemma, $\MB{\RS_{\partial\OI}}$ and $\MB{\RS_{\PO}}$ are the functor of points of the same scheme, up to isomorphism. Hence, 
$\Spec(K[C]/\mathfrak B)$ and $\Spec(K[C\setminus \tilde C]/\mathfrak P)$ are isomorphic.
\end{proof}

In \cite{BCRAffine} and in \cite{BLR} (in the latter case, under the hypothesis that $K$ has characteristic 0), the authors present an open cover for the Hilbert scheme parameterizing $d$-dimensional schemes in $\mathbb P_K^n$ with a prescribed Hilbert polynomial; the open subsets are Pommaret marked schemes, but in order to cover the whole Hilbert scheme, the action of the general linear group $\mathrm{GL}_K(n+1)$ is needed. We now highlight that, for the Hilbert scheme parameterizing $0$-dimensional subschemes of length $m$ in $\mathbb A_K^n$, the Pommaret marked schemes cover the whole Hilbert scheme {\em without any group action}. In the next statement, Proposition \ref{prop:cover}, we also  recall  the known analogous result for border marked schemes, see \cite[Remark 3.2 items b) and e)]{KR2008}, \cite[Remark 2.8]{KR2011} and \cite[Proposition 1]{Lederer}.

\begin{proposition}\label{prop:cover}
Consider the Hilbert scheme $\Hilb{m}{n}$ that parameterizes $0$-dimensional schemes of $\mathbb A^n_K$ of length $m$. Let $\mathscr O$ be the set containing the finite order ideals $\OI$ in $\PR_K$ such that $\vert\OI\vert=m$.
Then we have the two following open covers
\[
\Hilb{m}{n}=\cup_{\OI\in \mathscr O}\MBsch{\RS_{\partial \OI}}=\cup_{\OI\in \mathscr O}\MBsch{\RS_{\PO}},
\]
where  $\MBsch{\RS_{\partial \OI}}$ (resp. $\MBsch{\RS_{\PO}}$) is the scheme defined in Definition \ref{def:Bscheme} (resp. Definition \ref{def:Pscheme}) that represents the functor $\MB{\RS_{\partial \OI}}$ (resp. the functor $\MB{\RS_{\PO}}$).
For every $\OI\in \mathscr O$, the open subset $\MBsch{\RS_{\partial \OI}}$ of $\Hilb{m}{n}$ is isomorphic to the open subset $\MBsch{\RS_{\PO}}$.
\end{proposition}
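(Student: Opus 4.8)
The plan is to reduce the statement to facts already established in the excerpt, chiefly Corollary~\ref{cor:isomorfismo} and the standard fact that, as $\OI$ ranges over order ideals of cardinality $m$, the border marked schemes form an open cover of $\Hilb{m}{n}$. First I would recall precisely the classical statement for border schemes: for a finite order ideal $\OI$ with $\vert\OI\vert=m$, the border marked scheme $\MBsch{\RS_{\partial\OI}}$ is an open subscheme of $\Hilb{m}{n}$, namely the locus of those $0$-dimensional length-$m$ subschemes $Z\subset\mathbb A^n_K$ whose coordinate ring $A[x_1,\dots,x_n]/I_Z$ is free with basis $\OI$ (equivalently, the residues of $\OI$ span $A[x_1,\dots,x_n]/I_Z$); this is \cite[Remark 3.2 b) and e)]{KR2008}, \cite[Remark 2.8]{KR2011}, \cite[Proposition 1]{Lederer}. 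Since every finitely generated $0$-dimensional algebra of length $m$ over a field admits at least one order ideal as a basis, and this property is open, these opens cover $\Hilb{m}{n}$. That gives the first equality $\Hilb{m}{n}=\cup_{\OI\in\mathscr O}\MBsch{\RS_{\partial\OI}}$.

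Next I would transfer this to the Pommaret side. By Corollary~\ref{cor:isomorfismo} (whose proof via Yoneda rests on Theorem~\ref{prop:BorderAndPommB}), for each fixed $\OI\in\mathscr O$ the functors $\MB{\RS_{\partial\OI}}$ and $\MB{\RS_{\PO}}$ coincide as subfunctors of $\MS{\RS_{\partial\OI}}$, hence $\MBsch{\RS_{\partial\OI}}$ and $\MBsch{\RS_{\PO}}$ represent the same functor and are therefore isomorphic. Crucially, the isomorphism is compatible with the map to $\Hilb{m}{n}$: both schemes parameterize the same set of ideals $I\subset\PR_A$ (those generated by a $\partial\OI$-marked basis, equivalently by a $\PO$-marked basis generating the same ideal), and the classifying morphism to $\Hilb{m}{n}$ sends such an $I$ to the subscheme it cuts out — this is literally the same assignment for both. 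So $\MBsch{\RS_{\PO}}$ is also identified with the open subscheme of $\Hilb{m}{n}$ parameterizing length-$m$ schemes with $\OI$ as a basis, and in particular the second equality $\Hilb{m}{n}=\cup_{\OI\in\mathscr O}\MBsch{\RS_{\PO}}$ follows, together with the final sentence asserting $\MBsch{\RS_{\partial\OI}}\simeq\MBsch{\RS_{\PO}}$ as open subsets of the Hilbert scheme.

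Concretely, the steps in order are: (1) cite/recall that $\MBsch{\RS_{\partial\OI}}$ is the open subscheme $\{Z : \OI \text{ is an } A\text{-basis of }\OI_Z\}$ of $\Hilb{m}{n}$, with the open immersion given by the functorial description of Definition~\ref{def:Bscheme} and Theorem~\ref{thm:reprfuncB}; (2) observe these opens cover $\Hilb{m}{n}$ because every point of $\Hilb{m}{n}$, i.e.\ every ideal defining a length-$m$ subscheme, has some order ideal of cardinality $m$ as a basis of the quotient (choose, e.g., a basis compatible with some term order, or invoke that the complement of all the $\MBsch{\RS_{\partial\OI}}$ is empty since any $0$-dimensional quotient of length $m$ has a monomial basis which is automatically an order ideal); (3) invoke Corollary~\ref{cor:isomorfismo} and note the isomorphism is over $\Hilb{m}{n}$ since it is induced by the equality of subfunctors $\MB{\RS_{\partial\OI}}=\MB{\RS_{\PO}}$ inside the functor of points of the Hilbert scheme; (4) conclude the displayed double equality and the last assertion.

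The main obstacle I anticipate is step~(2): being careful that the union of the open loci really exhausts $\Hilb{m}{n}$ and not merely a dense open subset. The point to get right is that for a $0$-dimensional $A$-algebra of length $m$ one may need to work locally on $\Spec A$ (the order ideal giving a basis can vary over $\Spec A$), but since the $\MBsch{\RS_{\partial\OI}}$ are open and their set-theoretic union contains every closed point of $\Hilb{m}{n}$ over every field — each closed point corresponds to an ideal whose quotient is a finite-dimensional algebra admitting a monomial (hence order-ideal) basis of size $m$ — the union is all of $\Hilb{m}{n}$. This is exactly the content already recorded in the cited references, so in the write-up I would simply quote \cite{KR2008,KR2011,Lederer} for the border cover and then apply Corollary~\ref{cor:isomorfismo} locally over each member of the cover; everything else is formal.
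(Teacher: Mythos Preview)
Your proposal is correct and essentially parallels the paper's proof, with one small but genuine difference in how the Pommaret cover is obtained. The paper cites \cite[Remark 3.2 b)]{KR2008} and \cite[Proposition 1]{Lederer} for the border cover (as you do), but then establishes the Pommaret cover \emph{independently} by invoking \cite[Lemma 6.12 and Proposition 6.13]{BCRAffine} together with the observation that every Artinian monomial ideal is quasi-stable; only afterwards does it appeal to Corollary~\ref{cor:isomorfismo} for the isomorphism of the two open subsets. You instead \emph{derive} the Pommaret cover from the border cover by noting that the equality of functors $\MB{\RS_{\partial\OI}}=\MB{\RS_{\PO}}$ (Theorem~\ref{prop:BorderAndPommB}) forces the representing schemes to sit over the \emph{same} open locus in $\Hilb{m}{n}$. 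Your route is more self-contained within the paper---it avoids the external appeal to \cite{BCRAffine}---and in fact yields the slightly sharper conclusion that $\MBsch{\RS_{\partial\OI}}$ and $\MBsch{\RS_{\PO}}$ are not merely isomorphic open subsets but the \emph{same} open subscheme of $\Hilb{m}{n}$. The paper's route has the virtue of giving an independent check that the Pommaret schemes cover, without passing through the border theory.
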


\begin{proof}
See \cite[Remark 3.2 b)]{KR2008} and \cite[Proposition 1]{Lederer} for the fact that, as $\OI$ varies in $\mathscr O$, the border marked schemes cover $\Hilb{m}{n}$. The fact that also Pommaret marked schemes cover $\Hilb{m}{n}$, as $\OI$ varies in $\mathscr O$, is a consequence of \cite[Lemma 6.12 and Proposition 6.13]{BCRAffine}, combined with the fact that for every $\OI \in \mathscr O$, the ideal generated by $\Tn\setminus \OI$ is quasi-stable, i.e. has a Pommaret basis. The isomorphism between the open subsets of the open covers is that of Corollary \ref{cor:isomorfismo}.
\end{proof}

\section{The isomorphism between the border and Pommaret marked schemes}\label{sec:computations}

In the present section we explicitly present an isomorphism between the border  and the Pommaret marked schemes of the same finite order ideal $\OI$.
We describe the algebraic relation between the ideals $\mathfrak B\subset K[C]$ and $\mathfrak P\subset K[C\setminus \tilde C]$ defining the two schemes and, as a byproduct, obtain  a constructive method to eliminate the variables $\tilde C$.

\begin{theorem}\label{thm:elimination}
Let $\OI$ a finite order ideal. Let $\mathfrak B\subset K[C]$ be the ideal defining the border marked scheme of $\OI$ and $\mathfrak P\subset K[C\setminus \tilde C]$ be the ideal defining the Pommaret marked scheme  of $\OI$. Then, there is an ideal $\mathfrak B'\subset K[C]$ generated by $\vert \tilde C\vert$ polynomials of kind $C_{ij}-q_{ij}$ with $C_{ij}\in\tilde C$ and $q_{ij}\in K[C\setminus \tilde C]$, such that
\[
\mathfrak B=\mathfrak PK[C]+\mathfrak B'.
\]
\end{theorem}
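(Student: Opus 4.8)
The plan is to extract the defining equations of $\mathfrak B'$ directly from the reduction process that computes $S$-polynomials of the generic border marked set $\mathscr B$. The key observation is that $\mathscr B = \mathscr P \sqcup \mathscr B'$, where $\mathscr B' = \{b_s \mid \Ht(b_s) \in \partial\OI \setminus \PO\}$, and by Theorem \ref{prop:BorderAndPommB} a specialization of $\mathscr B$ is a basis if and only if the corresponding specialization of $\mathscr P$ is a basis (which is governed by $\mathfrak P$) and each $b_s \in \mathscr B'$ lies in the ideal generated by the specialized $\mathscr P$. So the role of $\mathfrak B'$ is exactly to encode the latter condition generically.

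First I would make this precise: for each $b_s \in \mathscr B'$ with $\Ht(b_s) = \beta_s$, compute the $\OI$-normal form $\widetilde{b_s}$ of $b_s$ modulo $(\mathscr P)$ using the Noetherian and confluent reduction relation $\ridc{\mathscr P\, \RS_{\PO}}$. Since $\Ht(b_s) = \beta_s \in (\Tn \setminus \OI) = (\PO)$, the reduction strips off the head term $\beta_s$, and what remains is a polynomial whose $\mathbf x$-coefficients are polynomials in $K[C \setminus \tilde C]$ in the parameters of $\mathscr P$ only, \emph{minus} the tail $\sum_j C_{sj}\sigma_j$ of $b_s$ with its parameters $C_{sj} \in \tilde C$ appearing linearly. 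Writing $\widetilde{b_s} = \sum_j (q_{sj} - C_{sj})\sigma_j$ with $q_{sj} \in K[C\setminus\tilde C]$, the polynomials $C_{sj} - q_{sj}$ (one for each $(s,j)$ with $\beta_s \in \partial\OI\setminus\PO$, $\sigma_j \in \OI$) are precisely $\vert\tilde C\vert$ polynomials of the required shape; let $\mathfrak B'$ be the ideal they generate.

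Next I would prove the equality $\mathfrak B = \mathfrak P K[C] + \mathfrak B'$ by a double inclusion, essentially a lift of the two implications in the proof of Theorem \ref{prop:BorderAndPommB} to the generic level. For $\supseteq$: the generators of $\mathfrak P$ are $\mathbf x$-coefficients of $S$-polynomial reductions among elements of $\mathscr P \subset \mathscr B$, and these reductions can be carried out inside $\RS_{\partial\OI}$ as well (the Pommaret multiplicative sets are contained in the border ones, or one argues via Buchberger-type equivalences), so these coefficients vanish on the border marked scheme; similarly $b_s - \widetilde{b_s} \in (\mathscr P) \subset (\mathscr B)$, and since $b_s \in (\mathscr B)$ generically reduces to $0$ modulo $(\mathscr B)$, the $\mathbf x$-coefficients $C_{sj} - q_{sj}$ of $\widetilde{b_s}$ must lie in $\mathfrak B$ — here one invokes that $\mathfrak B$ is the ideal of relations making $\mathscr B$ a basis, i.e.\ $\MB{\RS_{\partial\OI}}$ is represented by $\Spec(K[C]/\mathfrak B)$ (Theorem \ref{thm:reprfuncB}), so any $\mathbf x$-coefficient of any element of $(\mathscr B) \cap \langle\OI\rangle$ that is forced to vanish on bases lies in $\mathfrak B$. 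For $\subseteq$: modulo $\mathfrak B'$ one may substitute $C_{sj} \equiv q_{sj}$, which rewrites each $b_s \in \mathscr B'$ as an element of the ideal generated by $\mathscr P$ over $K[C]/\mathfrak B'$; hence $(\mathscr B) \equiv (\mathscr P)$ modulo $\mathfrak B'$, and the $S$-polynomial reductions defining $\mathfrak B$ (over neighbour couples) reduce — modulo $\mathfrak B'$ — to $S$-polynomial reductions among $\mathscr P$, which by Proposition \ref{prop:Spolivarmult} and Proposition \ref{prop:Buchpomm} produce $\mathbf x$-coefficients lying in $\mathfrak P K[C]$.

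The main obstacle I expect is the bookkeeping in the $\subseteq$ direction: one must show that \emph{every} generator of $\mathfrak B$ (the $\mathbf x$-coefficient of $h_{rs}$ for a neighbour couple $(\Ht(b_r),\Ht(b_s))$, possibly with $b_r$ or $b_s$ in $\mathscr B'$) is accounted for by $\mathfrak P K[C] + \mathfrak B'$, and this requires tracking how a border reduction of $S(b_r,b_s)$ behaves after the substitution $C_{sj}\mapsto q_{sj}$ and comparing it with a Pommaret reduction; the confluence of $\ridc{\mathscr P\,\RS_{\PO}}$ and of $\ridc{\mathscr B\,\RS_{\partial\OI}}$ is what makes the two reduction outputs agree up to elements of the respective ideals, but making the comparison airtight — especially handling neighbour couples that are not non-multiplicative, via the telescoping argument of Proposition \ref{prop:Spolivarmult} — is the delicate part. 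A clean alternative, which I would likely adopt to sidestep the worst of this, is purely scheme-theoretic: both $\Spec(K[C]/\mathfrak B)$ and $\Spec(K[C\setminus\tilde C]/\mathfrak P)$ represent the same functor (Corollary \ref{cor:isomorfismo}), and the natural closed immersion $\Spec(K[C\setminus\tilde C]) \hookrightarrow \Spec(K[C])$ cut out by $\mathfrak B'$ identifies the graph of the projection; then $\mathfrak B = \mathfrak P K[C] + \mathfrak B'$ follows because both ideals define the same reduced-to-a-point-section subscheme and $K[C]/(\mathfrak P K[C] + \mathfrak B') \cong K[C\setminus\tilde C]/\mathfrak P$ visibly represents $\MB{\RS_{\partial\OI}}$, so by the uniqueness in Theorem \ref{thm:reprfuncB} the two ideals coincide.
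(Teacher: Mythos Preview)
Your proposal is correct and your construction of $\mathfrak B'$ (reducing each $b_s\in\mathscr B'$, equivalently its head term, by $\ridc{\mathscr P\,\RS_{\PO}}$ and collecting the $\mathbf x$-coefficients $C_{sj}-q_{sj}$) is exactly the paper's. Your ``clean alternative'' at the end is in fact the paper's entire argument: once $\mathfrak B'$ is defined, the paper simply invokes Theorem~\ref{prop:BorderAndPommB} together with the representability Theorems~\ref{thm:Pmarkedscheme} and~\ref{thm:reprfuncB} to conclude that $\mathfrak P K[C]+\mathfrak B'$ and $\mathfrak B$ cut out the same closed subfunctor of $\MS{\RS_{\partial\OI}}$, hence coincide as ideals. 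Your longer double-inclusion route is correct in outline but, as you yourself note, carries needless bookkeeping (and the aside that Pommaret multiplicative sets are contained in the border ones is not true in general---you rightly fall back on representability there too); the paper bypasses all of that by going straight to the functorial argument. One small tightening: the ``uniqueness'' you appeal to is uniqueness of the defining ideal of a closed \emph{subfunctor} of $\mathrm{Hom}(K[C],-)$, not merely abstract isomorphism of representing schemes---so the check that the section $C_{sj}\mapsto q_{sj}$ really lands in the border marked scheme (which is immediate from Theorem~\ref{prop:BorderAndPommB}) is what makes the Yoneda step go through.
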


\begin{proof}
Consider the generic $\partial \OI$-marked set $\mathscr B$ and the generic $\PO$-marked set $\mathscr P$ contained in $\mathscr B$. Let $\mathscr B'=\{b'_i\}$ be the set of marked polynomials in $\mathscr B\setminus \mathscr P$  and denote by $\tilde C$ the parameters appearing in the polynomials in $\mathscr B'$.
Thanks to Theorem \ref{prop:BorderAndPommB}, we can obtain the ideal $\mathfrak B$ defining the border marked scheme in the following alternative way with respect to Definition~\ref{def:Bscheme}. 

Consider $\tau_i' \in \partial \OI\setminus \PO$ and  let $b_i'$ be the marked polynomial in $\mathscr B$ such that $\Ht(b_i')=\tau_i'$. By the Pommaret reduction relation, compute the unique $\OI$-reduced polynomial $h'_i$ such that
\begin{equation}\label{eq:ridB'}
\tau_i'\ridc{\mathscr P\,\RS_{\PO}} h_i'.
\end{equation}
Let $\mathfrak B'$ be the ideal in $K[C]$ generated by the $\mathbf x$-coefficients of $b_i'-(\tau_i'-h_i')$, for $\tau_i' \in \partial \OI\setminus \PO$. Observe that the $\mathbf x$-coefficients of $b_i'-(\tau_i'-h_i')$ are of the form $C_{ij}-q_{ij}$, where $C_{ij}\in \tilde C$   and $q_{ij} \in K[C\setminus \tilde C]$. 
By Theorem \ref{prop:BorderAndPommB}, the following equality of ideals in $K[C]$ holds:
\[
\mathfrak B=\mathfrak PK[C]+\mathfrak B'.
\]
\end{proof}

\begin{corollary} \label{cor:elim}Let $\OI$ be a finite order ideal. Let $\mathfrak B\subset K[C]$ be the ideal defining the border marked scheme of $\OI$ and $\mathfrak P\subset K[C\setminus \tilde C]$ be the ideal defining the Pommaret marked scheme  of $\OI$. Then, 
\[
\mathfrak P=\mathfrak B\cap K[C\setminus \tilde C].
\]
\end{corollary}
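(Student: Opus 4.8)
The plan is to deduce the corollary directly from Theorem~\ref{thm:elimination}, which already gives the decomposition $\mathfrak B=\mathfrak P K[C]+\mathfrak B'$ where $\mathfrak B'$ is generated by polynomials of the form $C_{ij}-q_{ij}$, one for each parameter $C_{ij}\in\tilde C$, with $q_{ij}\in K[C\setminus\tilde C]$. The key observation is that this presentation exhibits $\mathfrak B'$ as the graph of a polynomial section: the quotient map $K[C]\to K[C]/\mathfrak B'$ identifies each $C_{ij}\in\tilde C$ with $q_{ij}\in K[C\setminus\tilde C]$, so $K[C]/\mathfrak B'\cong K[C\setminus\tilde C]$ via the substitution homomorphism $\varphi\colon K[C]\to K[C\setminus\tilde C]$ that fixes $C\setminus\tilde C$ and sends $C_{ij}\mapsto q_{ij}$.

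\textbf{Key steps.} First I would record that $\varphi$ restricts to the identity on $K[C\setminus\tilde C]$ and that $\ker\varphi=\mathfrak B'$; the inclusion $\mathfrak B'\subseteq\ker\varphi$ is immediate from $\varphi(C_{ij}-q_{ij})=q_{ij}-q_{ij}=0$, and the reverse inclusion follows because every $f\in K[C]$ can be rewritten, modulo $\mathfrak B'$, as a polynomial in the variables $C\setminus\tilde C$ alone (repeatedly replace each occurrence of a variable $C_{ij}\in\tilde C$ by $q_{ij}$), so $f\equiv\varphi(f)\pmod{\mathfrak B'}$, and if $\varphi(f)=0$ then $f\in\mathfrak B'$. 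Second, I would apply $\varphi$ to the equality of Theorem~\ref{thm:elimination}: since $\varphi$ is the identity on $K[C\setminus\tilde C]\supseteq\mathfrak P$ and $\varphi(\mathfrak B')=0$, we get $\varphi(\mathfrak B)=\mathfrak P$. Third, to get the contraction statement I would argue both inclusions of $\mathfrak P=\mathfrak B\cap K[C\setminus\tilde C]$. For $\subseteq$: $\mathfrak P\subseteq\mathfrak P K[C]\subseteq\mathfrak B$ by Theorem~\ref{thm:elimination}, and $\mathfrak P\subseteq K[C\setminus\tilde C]$ by definition, so $\mathfrak P\subseteq\mathfrak B\cap K[C\setminus\tilde C]$. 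For $\supseteq$: take $f\in\mathfrak B\cap K[C\setminus\tilde C]$; then $f=\varphi(f)$ because $f$ involves only the variables $C\setminus\tilde C$, and $\varphi(f)\in\varphi(\mathfrak B)=\mathfrak P$, so $f\in\mathfrak P$.

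\textbf{Main obstacle.} There is no serious obstacle once Theorem~\ref{thm:elimination} is in hand; the only point requiring a little care is checking that $\ker\varphi$ is exactly $\mathfrak B'$ and not larger, i.e.\ that the ``rewriting'' is consistent. This is where one must use that each variable of $\tilde C$ appears in exactly one generator $C_{ij}-q_{ij}$ and that the $q_{ij}$ lie in $K[C\setminus\tilde C]$, so the substitution is well defined and terminates in one pass with no circularity. This is precisely the structure guaranteed by Theorem~\ref{thm:elimination} (the parameters $\tilde C$ are the $\mathbf{x}$-coefficients of $\mathscr B'$, each computed from a distinct $\OI$-reduction), so the argument goes through cleanly.
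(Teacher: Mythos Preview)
Your proof is correct and follows essentially the same approach as the paper: both deduce the corollary directly from Theorem~\ref{thm:elimination} by exploiting that the variables $\tilde C$ appear only in the generators $C_{ij}-q_{ij}$ of $\mathfrak B'$ and not in those of $\mathfrak P$. You make this explicit via the substitution homomorphism $\varphi$ (which the paper introduces only later, in Corollary~\ref{cor:isom}), whereas the paper's proof is a one-line appeal to this same observation; the content is identical.
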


\begin{proof}
By Theorem \ref{thm:elimination}, we have $\mathfrak B =\mathfrak P K[C]+\mathfrak B' \subseteq K[C]$.
Since the variables $\tilde C$ only appear in the generators of $\mathfrak B'$ and not in the generators we consider for $\mathfrak P$, then we obtain the thesis.
\end{proof}

Corollary \ref{cor:elim} has the computational consequence that the parameters $\tilde C$ can be eliminated from the generators of $\mathfrak B$ using the generators of the ideal $\mathfrak B'$ as computed in the proof of Theorem \ref{thm:elimination}.  This has the following geometrical consequence on the border and the Pommaret marked schemes of $\OI$.

\begin{corollary}\label{cor:isom}
Let $\OI$ be a finite order ideal. 
There is a projection morphism  from $A^{\vert C\vert}$ to $A^{\vert C\setminus \tilde C\vert}$ such that its restriction  to the border marked scheme is an isomorphism between the border marked scheme and the Pommaret one.
\end{corollary}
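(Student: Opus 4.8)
The plan is to realize the abstract isomorphism of Corollary~\ref{cor:isomorfismo} as an honest coordinate projection, reading everything off the equality $\mathfrak B=\mathfrak P K[C]+\mathfrak B'$ of Theorem~\ref{thm:elimination} together with the elimination statement $\mathfrak P=\mathfrak B\cap K[C\setminus \tilde C]$ of Corollary~\ref{cor:elim}. First I would fix the projection: viewing $\mathbb A^{\vert C\vert}=\Spec(K[C])$ and $\mathbb A^{\vert C\setminus \tilde C\vert}=\Spec(K[C\setminus \tilde C])$, the inclusion of rings $\iota\colon K[C\setminus \tilde C]\hookrightarrow K[C]$ induces the projection $\pi\colon \mathbb A^{\vert C\vert}\to \mathbb A^{\vert C\setminus \tilde C\vert}$ forgetting the coordinates indexed by $\tilde C$. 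By Corollary~\ref{cor:elim} we have $\iota^{-1}(\mathfrak B)=\mathfrak B\cap K[C\setminus \tilde C]=\mathfrak P$, so $\iota$ descends to a $K$-algebra homomorphism $\bar\iota\colon K[C\setminus \tilde C]/\mathfrak P\to K[C]/\mathfrak B$; dually, $\pi$ restricts to a morphism $\bar\pi$ from the border marked scheme $\Spec(K[C]/\mathfrak B)$ to the Pommaret marked scheme $\Spec(K[C\setminus \tilde C]/\mathfrak P)$ (the fact that $\bar\pi$ really lands in the Pommaret scheme is exactly $\mathfrak P=\mathfrak B\cap K[C\setminus\tilde C]$).

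The heart of the argument is to check that $\bar\iota$ is bijective. Injectivity is immediate: if $f\in K[C\setminus \tilde C]$ maps to $0$ in $K[C]/\mathfrak B$, then $f\in \mathfrak B\cap K[C\setminus \tilde C]=\mathfrak P$ by Corollary~\ref{cor:elim}. For surjectivity I would use the explicit generators of $\mathfrak B'$ built in the proof of Theorem~\ref{thm:elimination}: they are the $\mathbf x$-coefficients of $b_i'-(\tau_i'-h_i')$, one family for each $\tau_i'\in\partial\OI\setminus\PO$, and each such coefficient has the form $C_{ij}-q_{ij}$ with $C_{ij}\in\tilde C$ and $q_{ij}\in K[C\setminus\tilde C]$, the indices $C_{ij}$ running bijectively over all of $\tilde C$. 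Hence, working modulo $\mathfrak B'\subseteq\mathfrak B$, every parameter in $\tilde C$ is congruent to an element of $K[C\setminus \tilde C]$, so every class in $K[C]/\mathfrak B$ has a representative in $K[C\setminus \tilde C]$ and $\bar\iota$ is onto. Equivalently, substituting $C_{ij}\mapsto q_{ij}$ gives an isomorphism $K[C]/\mathfrak B'\cong K[C\setminus\tilde C]$ under which $\mathfrak B/\mathfrak B'$ corresponds to $\mathfrak P$, so that $K[C]/\mathfrak B\cong K[C\setminus\tilde C]/\mathfrak P$ directly. Thus $\bar\iota$ is a $K$-algebra isomorphism, and therefore $\bar\pi$ is an isomorphism of affine schemes.

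Finally I would observe that $\bar\pi$, being the restriction of the ambient projection $\pi$, is precisely the isomorphism between the two marked schemes supplied abstractly by Corollary~\ref{cor:isomorfismo}, now made explicit and computable without any Gr\"obner elimination. I do not expect a genuine obstacle here, since all the substantive content is already contained in Theorem~\ref{thm:elimination}; the only point that deserves a little care is the bookkeeping that the relations $C_{ij}-q_{ij}$ generating $\mathfrak B'$ are in bijection with $\tilde C$ (one eliminating relation per parameter occurring in $\mathscr B'$), which is exactly how $\mathfrak B'$ is produced in the proof of Theorem~\ref{thm:elimination} from the reductions $\tau_i'\ridc{\mathscr P\,\RS_{\PO}}h_i'$.
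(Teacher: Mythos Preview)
Your proposal is correct and follows essentially the same approach as the paper: both arguments hinge on the relations $C_{ij}-q_{ij}$ generating $\mathfrak B'$ to identify $K[C]/\mathfrak B$ with $K[C\setminus\tilde C]/\mathfrak P$. The only cosmetic difference is that the paper builds the substitution map $\Phi\colon K[C]\to K[C\setminus\tilde C]$, $C_{ij}\mapsto q_{ij}$, identifies its kernel as $\mathfrak B'$, and then passes to the quotient, whereas you start from the inclusion $\iota$ and verify directly that $\bar\iota$ is bijective; your ``equivalently'' remark is precisely the paper's route.
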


\begin{proof}
With the notation introduced in the proof of Theorem \ref{thm:elimination}, 
let $\Phi$ be the $K$-algebra surjective morphism $K[C] \to K[C\setminus \tilde C]$ such that 
$$\Phi(C_{i,j}):= \left\{\begin{array}{ll} q_{i,j}\in K[C\setminus \tilde C], &\text{if } C_{i,j}\in \tilde C\\ C_{i,j}, &\text{otherwise.}\end{array}\right.$$
The morphism $\Phi$ corresponds to the morphism $\phi: \mathbb A^{\vert C\setminus \tilde C\vert} \to  \mathbb A^{\vert C\vert}$, which associates to a closed $K$-point $P:=(a_1,\dots,a_{\vert C\setminus \tilde C\vert})$ the closed $K$-point $Q:=(a_1,\dots,a_{\vert C\setminus \tilde C\vert},q_{i,j}(a_1,\dots,a_{\vert C\setminus \tilde C\vert}))$.

The kernel of $\Phi$ coincides with the ideal $\mathfrak B'$ generated by the polynomials $C_{i,j}- q_{i,j}$, as $C_{i,j}$ varies in $\tilde C$. Hence, from $\Phi$ we obtain a  $K$-algebra isomorphism $\bar{\Phi}: K[C]/\mathfrak B' \to K[C\setminus \tilde C]$ which corresponds to an isomorphism $\bar{\phi}: \mathbb A^{\vert C\setminus \tilde C\vert}=\Spec(K[C\setminus \tilde C]) \to \Spec(K[C]/\mathfrak B')$ that is the inverse of the restriction to $\Spec(K[C]/\mathfrak B')$ of the projection from $A^{\vert C\vert}$ to $\mathbb A^{\vert C\setminus \tilde C\vert} $ .  

Since $\Phi(\mathfrak B)=\mathfrak P$, from $\Phi$ we obtain a $K$-algebra isomorphism $\Psi: K[C]/\mathfrak B \to K[C\setminus \tilde C]/\mathfrak P$, which gives to the inverse of the desired projection from $\Spec(K[C]/\mathfrak B)$ to $\Spec(K[C\setminus~\tilde C]/\mathfrak P)$.
\end{proof}

Corollary \ref{cor:elim} states that it is possible to know a priori a subset $\tilde C$ of  the variables in $C$ that can be eliminated from the border marked scheme, and its proof gives an effective method which does not use any Gr\"obner elimination computation.
In next Example \ref{ex:EqsSch}, the polynomials generating $\mathfrak B'$ that allow this elimination are also generators of $\mathfrak B$, 
but this does not happen in general, although linear terms often appear in the generators of $\mathfrak B$, allowing the elimination of some variables in $C$.
Actually, the polynomials we consider to generate the ideal $\mathfrak B$ have degree $\leq 2$ \cite[Remark~3.2 item~a)]{KR2008}. For the generators of $\mathfrak P$ and for the polynomials generating $\mathfrak B'$ we have the following result.

\begin{proposition}\label{prop:degree}\cite{BCRAffine}
Let $D:=\max\{\deg(\sigma)\vert \sigma\in \OI \}+1$.
\begin{enumerate}[(i)]
\item\label{it:deg1}  The generators of $\mathfrak P$ have degree bounded from above by  
$D(D^n-1)/(D-1)$.

\item  \label{it:deg2}
For every $\beta \in \partial \OI\setminus \PO$, let $s:=\deg(\beta)$. Then the generators of $\mathfrak B'$ obtained from $\beta$ by \eqref{eq:ridB'} are polynomials in $K[C]$ of degree bounded from above by $(s-1)(D^n-1)/(D-1)$.
\end{enumerate}
\end{proposition}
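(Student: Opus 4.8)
The plan is to bound the degrees of the coefficients produced by the Pommaret reduction relation $\ridc{\mathscr P\,\RS_{\PO}}$, tracking how the total degree in the parameters $C$ grows at each reduction step. Recall that a single step of $\ridc{\mathscr P\,\RS_{\PO}}$ replaces a term $\tau$ appearing in the current polynomial by (a scalar multiple of) $\sigma_j$-terms, where the new $\mathbf x$-coefficients are obtained from the old ones by multiplying by one of the $C_{ij}$. Thus each reduction step increases the $C$-degree of the coefficients by at most $1$, and the whole bound reduces to controlling the \emph{length} of a reduction chain, i.e.\ how many times a given monomial path can be reduced before landing in $\langle\OI\rangle$. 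For item \eqref{it:deg1}, the relevant $S$-polynomials $S(p_r,p_s)$ have head terms of degree at most $D$ (the $\mathrm{lcm}$ of two generators of $(\Tn\setminus\OI)$, each of degree $\le D$ since $\PO\subseteq\partial\OI$ and the border of $\OI$ sits in degree $\le D$). For item \eqref{it:deg2}, we start from a single term $\beta\in\partial\OI\setminus\PO$ of degree $s$.

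First I would make precise the key combinatorial estimate: if $g\in\PR_{K[C]}$ has all its terms of degree $\le d$ and $g\ridc{\mathscr P\,\RS_{\PO}} h$ with $h$ being $\OI$-reduced, then the $C$-degree of the coefficients of $h$ is bounded by (initial $C$-degree of $g$) plus the maximal number of successive reductions along any monomial path starting from a degree-$\le d$ term. Using Lemma \ref{lem:IndB} and the fact that in the Pommaret reduction each step strictly decreases $\min(\cdot)$ or the exponent structure in a controlled way — more precisely, following the argument in \cite{BCRAffine} that $\ridc{\,\cdot\,\RS_{\PO}}$ is Noetherian — one shows that starting from a term of degree $d$ the number of reduction steps is at most $d + d^2 + \dots + d^{n}=d(d^n-1)/(d-1)$ (each of the $n$ variables can contribute a geometrically growing chain of intermediate reductions, bounded by $D$ at each level). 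Applying this with $d=D$ for the $S$-polynomials gives \eqref{it:deg1}, and with $d=s-1$ (since after the first rewrite of $\beta$ by its top reducer we are left with terms of degree $\le s-1$, which then belong to $(\PO)$ and get reduced) gives \eqref{it:deg2}.

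Second I would assemble the two statements. For \eqref{it:deg1}: the generators of $\mathfrak P$ are the $\mathbf x$-coefficients of the $h'_{rs}$ where $S(p_r,p_s)\ridc{\mathscr P\,\RS_{\PO}} h'_{rs}$; the $S$-polynomial itself has $C$-degree $1$ in its coefficients and terms of degree $\le D$, so after at most $D(D^n-1)/(D-1)-1$ further steps — wait, one must be slightly careful: the bound $D(D^n-1)/(D-1)$ is meant to already absorb the initial unit contribution, and the cleanest route is to cite the degree analysis already carried out in \cite{BCRAffine} for exactly this scheme and simply record the resulting bound. For \eqref{it:deg2}: the generator coming from $\beta$ is the $\mathbf x$-coefficient of $b'_i-(\tau'_i-h'_i)$ where $\tau'_i=\beta$ and $\beta\ridc{\mathscr P\,\RS_{\PO}} h'_i$; the $\mathbf x$-coefficients of $b'_i$ are the parameters in $\tilde C$ (degree $1$), and the $\mathbf x$-coefficients of $h'_i$ have $C$-degree at most $(s-1)(D^n-1)/(D-1)$ by the chain-length estimate, so the difference has degree bounded by the maximum of the two, namely $(s-1)(D^n-1)/(D-1)$ (which dominates $1$ in all nontrivial cases).

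The main obstacle I expect is the chain-length estimate itself: converting the qualitative Noetherianity of the Pommaret reduction relation (from \cite[Proposition 4.3]{BCRAffine}) into the \emph{quantitative} bound $d(d^n-1)/(d-1)$ on the number of steps. This requires setting up the right well-founded ranking on terms — essentially a lexicographic combination, variable by variable from $x_n$ down to $x_1$, of the exponents capped by $D$ — and checking that each elementary reduction $g\rid{\mathscr P\,\RS_{\PO}} h$ strictly decreases this rank while the total number of distinct ranks below the starting value is the claimed geometric sum. Since this bookkeeping is already done in \cite{BCRAffine}, in the write-up I would state the estimate as a lemma, prove it by that ranking argument (or cite it), and then the rest of Proposition \ref{prop:degree} follows by the two routine substitutions described above.
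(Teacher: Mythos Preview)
Your overall strategy---bound the $C$-degree by bounding the length of Pommaret reduction chains, and defer the hard combinatorics to \cite{BCRAffine}---is exactly what the paper does: its entire proof is that item~\eqref{it:deg1} \emph{is} \cite[Corollary~7.8]{BCRAffine}, and for item~\eqref{it:deg2} one reduces $\beta\ridc{\mathscr P\,\RS_{\PO}} h_\beta$ and applies \cite[Theorem~7.5]{BCRAffine}. So at the level of ``cite the right result'' you are fine.

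However, the quantitative sketch you give of that cited result is wrong, and in a way that would not survive a careful write-up. You state the chain-length bound as $d+d^2+\dots+d^n=d(d^n-1)/(d-1)$, a function of the \emph{starting degree $d$ alone}, and then substitute $d=D$ for \eqref{it:deg1} and $d=s-1$ for \eqref{it:deg2}. But substituting $d=s-1$ yields $(s-1)((s-1)^n-1)/(s-2)$, which is not the claimed bound $(s-1)(D^n-1)/(D-1)$. The correct estimate from \cite{BCRAffine} necessarily depends on \emph{both} the starting data and the fixed parameter $D$ of the order ideal (indeed you parenthetically say ``bounded by $D$ at each level'', which already contradicts your displayed formula). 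Separately, your justification for the factor $s-1$ in item~\eqref{it:deg2}---``after the first rewrite of $\beta$ by its top reducer we are left with terms of degree $\le s-1$''---is false in general: take $\OI=\{1,x_1,x_2,x_1x_2\}$ and $\beta=x_1x_2^2$ (so $s=3$); its Pommaret reducer is $\alpha=x_2^2$ with $\eta=x_1$, and the resulting terms $\eta\sigma_j$ with $\sigma_j\in\OI$ reach degree $3=s$, not $s-1$. So the mechanism producing the factor $s-1$ is not the one you describe; to get the bound you must invoke the actual statement of \cite[Theorem~7.5]{BCRAffine} rather than improvise its content.
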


\begin{proof}
Item \eqref{it:deg1} is \cite[Corollary~7.8]{BCRAffine}.

For what concerns item \eqref{it:deg2}, it is sufficient  to consider the polynomial  $h_\beta\in \langle \OI\rangle$ such that $\beta\ridc{\RS_{\PO}\,\mathcal P}h_\beta$ and apply \cite[Theorem 7.5]{BCRAffine}.
\end{proof}

So, in general the generators of $\mathfrak B'$ have degree strictly bigger than 2 and do not belong to the set of generators of $\mathfrak B$ we consider, as Example \ref{ex:final} will show.

Finally, we underline that Corollary \ref{cor:isom} gives an embedding of the border marked  scheme $\Spec(K[C]/\mathfrak B)$ in an affine space of dimension lower than $\vert C\vert$, namely of dimension $\vert C\setminus \tilde C\vert$. However,  there might be embeddings of the border marked scheme in affine spaces of even smaller  dimension, as highlighted in Example \ref{ex:EqsSch}. Nevertheless, when $\vert C\setminus \tilde C\vert\ll \vert C\vert$, in general it is not possible to obtain an embedding in an affine space of dimension with Groebner elimination or direct computations, see Example \ref{ex:final}.

\begin{example}\label{ex:EqsSch}
Consider the finite order ideal $\OI=\{1,x_1,x_2,x_1x_2\}$ and the generic $\partial \OI$-marked set $\mathscr B$ as defined in Example \ref{ex:genMset}. In this case, the ideal $\mathfrak B\subset K[C]$ of Definition \ref{def:Bscheme} is generated by the following polynomials:
\[
-C_{{1,3}}C_{{2,1}}-C_{{1,4}}C_{{4,1}}+C_{{3,1}},\quad
-C_{{1,3}}C_{{2,2}}-C_{{1,4}}C_{{4,2}}+C_{{3,2}},\quad
-C_{{1,3}}C_{{2,3}}-C_{{1,4}}C_{{4,3}}-C_{{1,1}}+C_{{3,3}},\]
\[-C_{{1,3}}C_{{2,4}}-C_{{1,4}}C_{{4,4}}-C_{{1,2}}+C_{{3,4}},\quad
-C_{{1,1}}C_{{2,2}}-C_{{2,4}}C_{{3,1}}+C_{{4,1}},\quad
-C_{{1,2}}C_{{2,2}}-C_{{2,4}}C_{{3,2}}-C_{{2,1}}+C_{{4,2}},\]
\[-C_{{1,3}}C_{{2,2}}-C_{{2,4}}C_{{3,3}}+C_{{4,3}},\quad
-C_{{1,4}}C_{{2,2}}-C_{{2,4}}C_{{3,4}}-C_{{2,3}}+C_{{4,4}},\]
\[C_{{1,1}}C_{{4,2}}-C_{{2,1}}C_{{3,3}}+C_{{3,1}}C_{{4,4}}-C_{{3,4}}C_{{4,1}},\quad
C_{{1,4}}C_{{4,2}}-C_{{2,4}}C_{{3,3}}-C_{{3,2}}+C_{{4,3}}\]
\[C_{{1,2}}C_{{4,2}}-C_{{2,2}}C_{{3,3}}+C_{{3,2}}C_{{4,4}}-C_{{3,4}}C_{{4,2}}+C_{{4,1}},\quad
C_{{1,3}}C_{{4,2}}-C_{{2,3}}C_{{3,3}}+C_{{3,3}}C_{{4,4}}-C_{{3,4}}C_{{4,3}}-C_{{3,1}}.\]
They are obtained computing the $\OI$-reduced forms modulo $\mathscr B$ by $\ridc{\mathscr B\,\partial \OI}$ of the $S$-polynomials of the neighbour couples $(b_1,b_3)$, $(b_2,b_4)$, $(b_3,b_4)$.
The ideal $\mathfrak P\subset K[C\setminus \tilde C]$ of Definition \ref{def:Pscheme} is  generated by the following polynomials
\[
-C_{{1,1}}C_{{1,4}}C_{{2,2}}-C_{{1,4}}C_{{2,4}}C_{{3,1}}-C_{{1,3}}C_{{2,1}}+C_{{3,1}},\quad
-C_{{1,3}}C_{{1,4}}C_{{2,2}}-C_{{1,4}}C_{{2,4}}C_{{3,3}}-C_{{1,3}}C_{{2,3}}-C_{{1,1}}+C_{{3,3}},
\]
\[
-C_{{1,2}}C_{{1,4}}C_{{2,2}}-C_{{1,4}}C_{{2,4}}C_{{3,2}}-C_{{1,3}}C_{{2,2}}-C_{{1,4}}C_{{2,1}}+C_{{3,2}},\]
\[-{C_{{1,4}}}^{2}C_{{2,2}}-C_{{1,4}}C_{{2,4}}C_{{3,4}}-C_{{1,3}}C_{{2,4}}-C_{{1,4}}C_{{2,3}}-C_{{1,2}}+C_{{3,4}},
\]\[
-C_{{1,1}}C_{{1,2}}C_{{2,2}}+C_{{1,1}}C_{{2,2}}C_{{3,4}}-C_{{1,1}}C_{{2,4}}C_{{3,2}}-C_{{1,4}}C_{{2,2}}C_{{3,1}}-C_{{1,1}}C_{{2,1}}+C_{{2,1}}C_{{3,3}}-C_{{2,3}}C_{{3,1}},\]
\begin{multline*}
-{C_{{1,2}}}^{2}C_{{2,2}}+C_{{1,2}}C_{{2,2}}C_{{3,4}}-C_{{1,2}}C_{{2,4}}C_{{3,2}}-C_{{1,4}}C_{{2,2}}C_{{3,2}}-C_{{1,1}}C_{{2,2}}-C_{{1,2}}C_{{2,1}}+C_{{2,1}}C_{{3,4}}+\\+C_{{2,2}}C_{{3,3}}-C_{{2,3}}C_{{3
,2}}-C_{{2,4}}C_{{3,1}},\end{multline*}
\[
-C_{{1,2}}C_{{1,3}}C_{{2,2}}+C_{{1,3}}C_{{2,2}}C_{{3,4}}-C_{{1,3}}C_{{2,4}}C_{{3,2}}-C_{{1,4}}C_{{2,2}}C_{{3,3}}-C_{{1,3}}C_{{2,1}}+C_{{3,1}},\]\[
-C_{{1,2}}C_{{1,4}}C_{{2,2}}-C_{{1,4}}C_{{2,4}}C_{{3,2}}-C_{{1,3}}C_{{2,2}}-C_{{1,4}}C_{{2,1}}+C_{{3,2}}.
\]
Among the generators of $\mathfrak B$, there are polynomials containing  $C_{4,i}$, $i=1,\dots ,4$. These variables do not appear in the generators of $\mathfrak P$, because the polynomial $b_4$ is not involved in its construction. 
The polynomials that generate $\mathfrak B'\subset K[C]$ as in Theorem \ref{thm:elimination} are
\begin{equation*}
C_{{4,1}}-C_{{1,1}}C_{{2,2}}-C_{{2,4}}C_{{3,1}},\quad C_{{4,2}}-C_{
{1,2}}C_{{2,2}}-C_{{2,4}}C_{{3,2}}-C_{{2,1}},
\end{equation*}
\begin{equation*}
C_{{4,3}}- C_{{1,3}}C_{{2,2}}-C_{{2,4}}C_{{3,3}},\quad C_{{4,4}}- C_{{1,4}}C_{{2,2}}-C_{{2,4}}C_{{3,4}}-C_{{2,3}}.
\end{equation*}
The morphism $\Phi$ of Corollary \ref{cor:isom} operates in the following way: $\Phi(C_{i,j})=C_{i,j}$ for $i=1,2,3$ and every $j$, and
\[\Phi(C_{{4,1}})= C_{{1,1}}C_{{2,2}}+C_{{2,4}}C_{{3,1}},\quad \Phi(C_{{4,2}})=C_{
{1,2}}C_{{2,2}}+C_{{2,4}}C_{{3,2}}+C_{{2,1}},\]
\[\Phi(C_{{4,3}})= C_{{1,3}}C_{{2,2}}+C_{{2,4}}C_{{3,3}},\quad \Phi(C_{{4,4}})= C_{{1,4}}C_{{2,2}}+C_{{2,4}}C_{{3,4}}+C_{{2,3}}.
\]

 The morphism $\Phi$ embeds the border marked scheme $\Spec(K[C]/\mathfrak B)\subset \mathbb A^{16}$ into an affine space of dimension $\vert C\setminus \tilde C\vert=12$.  We highlight that this is not the smallest possible embedding for this border marked scheme, which is actually isomorphic to $\mathbb A^8$, as shown by direct elimination in \cite[Example 3.8]{KR2008}.
\end{example}

\begin{example}\label{ex:final}
Consider the order ideal $\mathcal O=\{1,x_{{1}},x_{{2}},x_{{3}},x_{{1}}^{2},x_{{1}}^{3},x_{{1}}^{4}\}\subset \PR_K=K[x_1,x_2,x_3]$. The border of $\OI$ is
\[
\partial \OI=\{x_{{2}}x_{{1}},x_{{2}}^{2},x_{{3}}x_{{1}},x_{{3}}x_{{2}},x_{{3}}^
{2},x_{{2}}x_{{1}}^{2},x_{{3}}x_{{1}}^{2},x_{{2}}x_{{1}}^{3},x_{
{3}}x_{{1}}^{3},x_{{1}}^{5},x_{{2}}x_{{1}}^{4},x_{{3}}x_{{1}}^{4}
\},
\]
and the Pommaret basis of the ideal generated by $\Tn\setminus \OI$ is
\[
\PO=\{x_{{3}}^{2},x_{{3}}x_{{2}},x_{{2}}^{2},x_{{3}}x_{{1}},x_{{2}}x_{{
1}},x_{{1}}^{5}\}.
\]
Observe that, in this case, the Pommaret basis of the ideal generated by $\Tn\setminus \OI$ coincides with the minimal monomial basis, because $(\Tn\setminus \OI)$ is a \emph{stable} ideal.

For the interested reader, ancillary files containing the generic marked sets $\mathscr B$ and $\mathscr P$, the set of eliminable parameters $\tilde C$ and the generators  of the ideals $\mathfrak B$, $\mathfrak P$ and $\mathfrak B'$ are available at https://sites.google.com/view/cristinabertone/ancillary/borderpommaret

Constructing $\mathfrak B$ as in Definition \ref{def:Bscheme} and thanks to Theorem \ref{thm:reprfuncB}, the border marked scheme of $\OI$ is a closed subscheme of $\mathbb A^{\vert C\vert}=\mathbb A^{84}$, and its defining ideal is generated by a set of 126 polynomials of degree 2.

Constructing $\mathfrak P$ as in Definition \ref{def:Pscheme} and thanks to Theorem \ref{thm:Pmarkedscheme},  the Pommaret marked scheme of $\OI$ is a closed subscheme of $\mathbb A^{\vert C\setminus \tilde C\vert}=\mathbb A^{42}$, and its defining ideal is defined by a set of 56 polynomials of degree  at most 5.

The polynomials $C_{ij}-q_{ij}$, which generates the ideal $\mathfrak B'$ and allow the elimination of the 42 parameters $\tilde C$, have degrees between $2$ and $4$. In particular, there are $14$ polynomials of degree $2$ that generate $\mathfrak B'$ and also belong to the set of polynomials given in Definition \ref{def:Bscheme} generating $\mathfrak B$. Hence, $14$ of the $42$ variables in $\tilde C$ can be eliminated by some generators of $\mathfrak B$. Then among the polynomials generating $\mathfrak B'$ there are $14$ polynomials of degree $3$ and $14$ polynomials of degree $4$, which do not belong to the set of the generators of $\mathfrak B$ we consider, which have degree $2$.

So, Corollary \ref{cor:elim} has two important consequences on the elimination of variables from the border marked scheme. Firstly, it identifies $\tilde C$ as a set of eliminable variables. In the present example, if one only knows that $42$ of the variables in $C$ are eliminable, it is almost impossible to find an eliminable set of this size without 
knowing that such a set is given by the variables appearing in marked polynomials with head terms in $\partial \OI\setminus \PO$. Secondly, even if one knows that the variables in  $\tilde C$  can be eliminated, this is not feasible by a Gr\"obner  elimination. We stopped the computation of such a Gr\"obner basis on this example after one hour of computation by CoCoA 5 (see \cite{CoCoA-5}) and Maple 18 (see \cite{Maple}), while it takes few seconds to compute the polynomials generating $\mathfrak B'$ by $\ridc{\mathscr P\,\RS_{\PO}}$.
\end{example}

\section*{Acknowledgments}

The authors are members of GNSAGA (INdAM, Italy). This research is partially supported  by MIUR funds FFABR-Cioffi-2017 and by Dipartimento di Matematica, Universit\`a di Torino, in the framework of the projects \lq\lq Algebra e argomenti correlati\rq\rq \ and \lq\lq Algebra, Geometria e Calcolo Numerico\rq\rq.

\bibliographystyle{amsplain}

\begin{thebibliography}{10}

\bibitem{CoCoA-5}
J.~Abbott, A.~M. Bigatti, and L.~Robbiano, \emph{{CoCoA}: a system for doing
  {C}omputations in {C}ommutative {A}lgebra}, Available at
  \texttt{http://cocoa.dima.unige.it}.

\bibitem{Almost}
C.~Bertone and F.~Cioffi, \emph{On almost revlex ideals with {H}ilbert function
  of complete intersections}, Ricerche di Matematica (2019), in press,
  https://doi.org/10.1007/s11587-019-00453-z.

\bibitem{BCLR}
C.~Bertone, F.~Cioffi, P.~Lella, and M.~Roggero, \emph{Upgraded methods for the
  effective computation of marked schemes on a strongly stable ideal}, J.
  Symbolic Comput. \textbf{50} (2013), 263--290.

\bibitem{BCRAffine}
C.~Bertone, F.~Cioffi, and M.~Roggero, \emph{Macaulay-like marked bases}, J.
  Algebra Appl. \textbf{16} (2017), no.~5, 1750100, 36.

\bibitem{BCRGore}
\bysame, \emph{Smoothable {G}orenstein points via marked schemes and
  double-generic initial ideals}, Exp. {M}ath. (2019), in press,
  https://doi.org/10.1080/10586458.2019.1592034.

\bibitem{BLR}
C.~Bertone, P.~Lella, and M.~Roggero, \emph{A {B}orel open cover of the
  {H}ilbert scheme}, J. Symbolic Comput. \textbf{53} (2013), 119--135.

\bibitem{CMR2015}
M.~Ceria, T.~Mora, and M.~Roggero, \emph{Term-ordering free involutive bases},
  J. Symbolic Comput. \textbf{68} (2015), no.~part 2, 87--108.

\bibitem{CMR}
\bysame, \emph{A general framework for {N}oetherian well ordered polynomial
  reductions}, J. Symbolic Comput. \textbf{95} (2019), 100--133.

\bibitem{CR}
F.~Cioffi and M.~Roggero, \emph{Flat families by strongly stable ideals and a
  generalization of {G}r\"{o}bner bases}, J. Symbolic Comput. \textbf{46}
  (2011), no.~9, 1070--1084.

\bibitem{GB}
V.~P. Gerdt and Y.~A. Blinkov, \emph{Involutive bases of polynomial ideals},
  vol.~45, 1998, Simplification of systems of algebraic and differential
  equations with applications, pp.~519--541.

\bibitem{HM2011}
M.~E. Huibregtse, \emph{Some syzygies of the generators of the ideal of a
  border basis scheme}, Collect. Math. \textbf{62} (2011), no.~3, 341--366.

\bibitem{HM2017}
\bysame, \emph{Some elementary components of the {H}ilbert scheme of points},
  Rocky Mountain J. Math. \textbf{47} (2017), no.~4, 1169--1225.

\bibitem{Janet}
M.~Janet, \emph{Sur les syst\`emes d'\'{e}quations aux d\'{e}riv\'{e}es
  partielles}, J. Math. Pure Appl. \textbf{3} (1920), no.~3, 65--151.




\bibitem{KKR}
A.~Kehrein, M.~Kreuzer, and L.~Robbiano, \emph{An algebraist's view on border
  bases}, Solving polynomial equations, Algorithms Comput. Math., vol.~14,
  Springer, Berlin, 2005, pp.~169--202.

\bibitem{KRvol2}
M.~Kreuzer and L.~Robbiano, \emph{Computational commutative algebra. 2},
  Springer-Verlag, Berlin, 2005.

\bibitem{KR2008}
\bysame, \emph{Deformations of border bases}, Collect. Math. \textbf{59}
  (2008), no.~3, 275--297.

\bibitem{KR2011}
\bysame, \emph{The geometry of border bases}, J. Pure Appl. Algebra
  \textbf{215} (2011), no.~8, 2005--2018.

\bibitem{Lederer}
M.~Lederer, \emph{Gr\"{o}bner strata in the {H}ilbert scheme of points}, J.
  Commut. Algebra \textbf{3} (2011), no.~3, 349--404.

\bibitem{LR2011}
P.~Lella and M.~Roggero, \emph{Rational components of {H}ilbert schemes}, Rend.
  Semin. Mat. Univ. Padova \textbf{126} (2011), 11--45.

\bibitem{Maple}
Maple, release 18, Maplesoft, a division of Waterloo Maple Inc., Waterloo,
  Ontario, 2014.

\bibitem{MMM}
M.~G. Marinari, H.~M. M\"{o}ller, and T.~Mora, \emph{Gr\"{o}bner bases of
  ideals defined by functionals with an application to ideals of projective
  points}, Appl. Algebra Engrg. Comm. Comput. \textbf{4} (1993), no.~2,
  103--145.

\bibitem{MS2005}
E.~Miller and B.~Sturmfels, \emph{Combinatorial commutative algebra}, Graduate
  Texts in Mathematics, vol. 227, Springer-Verlag, New York, 2005. \MR{2110098}

\bibitem{MS}
H.~M. M\"{o}ller and H.~J. Stetter, \emph{Multivariate polynomial equations
  with multiple zeros solved by matrix eigenproblems}, Numer. Math. \textbf{70}
  (1995), no.~3, 311--329.

\bibitem{SPES4}
T.~Mora, \emph{Solving polynomial equation systems. {V}ol. {IV}. {B}uchberger
  theory and beyond}, Encyclopedia of Mathematics and its Applications, vol.
  158, Cambridge University Press, Cambridge, 2016.

\bibitem{Mourrain1999}
B.~Mourrain, \emph{A new criterion for normal form algorithms}, Applied
  algebra, algebraic algorithms and error-correcting codes ({H}onolulu, {HI},
  1999), Lecture Notes in Comput. Sci., vol. 1719, Springer, Berlin, 1999,
  pp.~430--443.

\bibitem{RStu}
A.~Reeves and B.~Sturmfels, \emph{A note on polynomial reduction}, J. Symbolic
  Comput. \textbf{16} (1993), no.~3, 273--277.

\bibitem{Stetter}
H.~J. Stetter, \emph{Numerical polynomial algebra}, Society for Industrial and
  Applied Mathematics (SIAM), Philadelphia, PA, 2004.

\end{thebibliography}
\providecommand{\bysame}{\leavevmode\hbox to3em{\hrulefill}\thinspace}
\providecommand{\MR}{\relax\ifhmode\unskip\space\fi MR }
\providecommand{\MRhref}[2]{%
  \href{http://www.ams.org/mathscinet-getitem?mr=#1}{#2}
}
\providecommand{\href}[2]{#2}

\end{document}